\newenvironment{bprooftree}
  {\leavevmode\hbox\bgroup}
  {\DisplayProof\egroup}
\newtheorem{theorem}{Theorem}
\numberwithin{theorem}{section}
\newtheorem{corollary}[theorem]{Corollary}
\newtheorem{lemma}[theorem]{Lemma}
\newtheorem{proposition}[theorem]{Proposition}
\theoremstyle{definition}
\newtheorem{definition}[theorem]{Definition}
\newtheorem{remark}[theorem]{Remark}
\title{A mathematical commitment without computational strength}
\author{Anton Freund}
\begin{document}

\begin{abstract}
We present a new manifestation of G\"odel's second incompleteness theorem and discuss its foundational significance, in particular with respect to Hilbert's program. Specifically, we consider a proper extension of Peano arithmetic ($\mathbf{PA}$) by a mathematically meaningful axiom scheme that consists of $\Sigma^0_2$-sentences. These sentences assert that each computably enumerable (\mbox{$\Sigma^0_1$-definable} without parameters) property of finite binary trees has a finite basis. Since this fact entails the existence of polynomial time algorithms, it is important for computer science. On a technical level, our axiom scheme is a variant of an independence result due to Harvey Friedman. At the same time, the meta-mathematical properties of our axiom scheme distinguish it from most known independence results: Due to its logical complexity, our axiom scheme does not add computational strength. The only known method to establish its independence relies on G\"odel's second incompleteness theorem. In contrast, G\"odel's theorem is not needed for typical examples of $\Pi^0_2$-independence (such as the Paris-Harrington principle), since computational strength provides an extensional invariant on the level of $\Pi^0_2$-sentences.
\end{abstract}

\subjclass[2010]{03F30, 03F40, 03A05, 68R10}
\keywords{Independence, computational strength, G\"odel's second incompleteness theorem, Hilbert's program, Kruskal's theorem, polynomial-time algorithm}

\maketitle

\section{Summary of mathematical results}

This paper consists of mathematical results and a foundational discussion. The former are summarized in the present section; the latter can be found in Section~\ref{sect:foundational}. In the remaining sections we provide detailed proofs of all mathematical claims.

First and foremost, our paper is based on a result by Dick de Jongh (unpublished; cf.~the introduction to~\cite{schmidt75}) and Diana~Schmidt~\cite{schmidt-habil}: The embeddability relation on finite binary trees yields a well partial order with maximal order type~$\varepsilon_0$ (see below for an explanation). Harvey~Friedman~\cite{simpson85} has show that this type of result yields statements of finite combinatorics that are independent of important mathematical axiom systems. Against this background, many arguments in the present paper may be considered folklore. Nevertheless we find it worthwhile to give an explicit presentation, not least because the arguments are rather sensitive with respect to quantifier complexity and the presence of parameters. At some places we provide more details than the expert may find necessary. The aim is to make the paper as accessible and self-contained as possible.

We write $\mathcal B$ for the set of finite binary trees. More precisely, we assume that each tree has a distinguished root node, that nodes have either zero or two children, and that left and right child can be distinguished. Furthermore, we identify isomorphic trees. Formally, we view~$\mathcal B$ as the least fixed point of the following inductive clauses:
\begin{enumerate}[label=(\roman*)]
\item There is an element $\circ\in\mathcal B$ (the tree that consists of a single root node).
\item Given $s$ and $t$ in $\mathcal B$, we obtain an element $\circ(s,t)\in\mathcal B$ (the tree in which the root has left subtree~$s$ and right subtree $t$).
\end{enumerate}
For $s,t\in\mathcal B$ we write $s\leq_{\mathcal B}t$ if there is a tree embedding of $s$ into~$t$. Such an embedding can either map the root to the root and the immediate subtrees of $s$ into the corresponding subtrees of~$t$; or it maps all of $s$ into one subtree of~$t$. Hence we have $\circ\leq_{\mathcal B}t$ for any $t\in\mathcal B$; we have $s\leq_{\mathcal B}\circ$ precisely for $s=\circ$; and we have
\begin{equation*}
\circ(s_0,s_1)\leq_{\mathcal B}\circ(t_0,t_1)\quad\Leftrightarrow\quad\begin{cases}
s_0\leq_{\mathcal B}t_0\text{ and }s_1\leq_{\mathcal B}t_1,\\
\text{or }\circ(s_0,s_1)\leq_{\mathcal B}t_i\text{ for some }i\in\{0,1\}.
\end{cases}
\end{equation*}
These clauses provide a recursive definition of~$\leq_{\mathcal B}$.

Recall that a partial order consists of a set $X$ and a binary relation~$\leq_X$ on~$X$ that is reflexive, antisymmetric and transitive. A finite or infinite sequence $x_0,x_1,\ldots$ in~$X$ is called good if there are indices $i<j$ such that we have $x_i\leq_X x_j$; otherwise, the sequence is called bad. If there is no infinite bad sequence, then $(X,\leq_X)$ is called a well partial order~(wpo). Equivalently, a partial order $(X,\leq_X)$ is a wpo if, and only if, every subset $Y\subseteq X$ has a finite ``basis" $a\subseteq Y$ with the following property: for any $y\in Y$ there is an $x\in a$ with $x\leq_X y$ (cf.~the argument in Remark~\ref{rmk:KSigma-true} below).

If $X$ is a wpo, then all its linearizations are well orders (since a strictly decreasing sequence in a linearization would be a bad sequence in~$X$). Hence the order type of each linearization is an ordinal number. The supremum of these ordinals is called the maximal order type of~$X$. As shown by D.~de~Jongh and R.~Parikh~\cite{deJongh-Parikh}, the maximal order type of any wpo is realized by one of its linearizations (i.\,e.~the supremum is a maximum).

Kruskal's theorem~\cite{kruskal60} implies that $(\mathcal B,\leq_{\mathcal B})$ is a well partial order. We point out that the theorem applies to arbitrary (i.\,e.~not necessarily binary)  finite trees; the ``most general" version of Kruskal's theorem is investigated in~\cite{frw-kruskal}. Concerning the binary case, de Jongh and Schmidt have proved the finer result that $\mathcal B$ has maximal order type~$\varepsilon_0$, which is the least fixed point of ordinal exponentiation with base~$\omega$ (read~\cite[Theorem~II.2]{schmidt-habil} in combination with the example after~\cite[Definition~I.15]{schmidt-habil}). A classical result of G.~Gentzen~\cite{gentzen36,gentzen43} establishes~$\varepsilon_0$ as the proof theoretic ordinal of Peano arithmetic~($\mathbf{PA}$). This explains the connection with independence results.

In the present paper we consider the binary Kruskal theorem in the context of first order arithmetic; an introdution to this setting can be found in~\cite{hajek91}. We will be particularly interested in questions of quantifier complexity: Recall that a formula lies in the class $\Delta^0_0=\Sigma^0_0=\Pi^0_0$ if it does only contain bounded quantifiers. Since the latter range over a finite domain, the truth of closed $\Delta^0_0$-formulas is uniformly decidable. A $\Sigma^0_{n+1}$-formula ($\Pi^0_{n+1}$-formula) has the form $\exists_x\varphi$ (the form $\forall_x\varphi$), where $\varphi$ is a $\Pi^0_n$-formula ($\Sigma^0_n$-formula). Recall that the $\Sigma^0_1$-formulas correspond to the computably enumerable relations. A relation is $\Delta^0_1$-definable (in~$\mathbf{PA}$) if it has a $\Sigma^0_1$-definition and a $\Pi^0_1$-definition (which $\mathbf{PA}$ proves to be equivalent). The $\Delta^0_1$-relations coincide with the decidable ones.

Working in $\mathbf{PA}$, the elements of $\mathcal B$ can be represented by numerical codes for finite sets of sequences with entries from~$\{0,1\}$. Note that the relations $s\in\mathcal B$ and $s\leq_{\mathcal B}t$ are $\Delta^0_1$-definable in~$\mathbf{PA}$. As mentioned above, the fact that $\mathcal B$ is a wpo can be expressed in terms of a finite basis property. To state the latter we abbreviate
\begin{equation*}
\exists^{\operatorname{fin}}_a\psi(a):\equiv\exists_a(\text{``$a\in\mathbb N$ codes a finite set"}\land\psi(a)).
\end{equation*}
In the context of~$\mathbf{PA}$ it is natural to focus on definable sets. Given a formula $\varphi(s)$ with a distinguished free variable, the finite basis property for $\{s\in\mathcal B\,|\,\varphi(s)\}\subseteq\mathcal B$ can be formalized as
\begin{equation*}
\mathcal K\varphi:\equiv\exists^{\operatorname{fin}}_{a\subseteq\mathcal B}(\forall_{s\in a}\varphi(s)\land\forall_{t\in\mathcal B}(\varphi(t)\rightarrow\exists_{s\in a}s\leq_{\mathcal B}t)).
\end{equation*}
Note that the quantifiers with subscript $s\in a$ are bounded, since $a$ is a code for a finite set (cf.~\cite[Lemma~I.1.32]{hajek91}); in contrast, the quantifiers with subscripts $a\subseteq\mathcal B$ and $t\in\mathcal B$ are unbounded. The symbol $\mathcal K$ alludes to Kruskal's theorem, which implies that all instances $\mathcal K\varphi$ are true (see~Remark~\ref{rmk:KSigma-true} for details). We will be most interested in the axiom scheme
\begin{equation*}
\mathcal K\Sigma^-_1:=\{\mathcal K\varphi\,|\,\text{``$\varphi$ a $\Sigma^0_1$-formula with exactly one free variable"}\}.
\end{equation*}
The superscript of $\Sigma^-_1$ emphasizes the fact that no further free variables are allowed. This ensures that each instance of $\mathcal K\Sigma^-_1$ is a closed $\Sigma^0_2$-formula.

To motivate the restrictions on the quantifier complexity and the parameters, we recall the notion of computational strength: A computable function $f:\mathbb N\to\mathbb N$ is provably total in a suitable theory $\mathbf T$ if the latter proves $\forall_x\exists!_y\varphi(x,y)$ for some \mbox{$\Sigma^0_1$-definition}~$\varphi$ of the graph of~$f$ (where $\exists!$ abbreviates the existence of a unique witness). The computational strength of a theory is commonly identified with the collection of its provably total computable functions.

It is known that the computational strength of a theory does not increase when we add a true $\Pi^0_1$-sentence $\psi$ as an axiom. Essentially, this is due to the fact that the $\Sigma^0_1$-formula $\psi\to\varphi(x,y)$ defines the same graph as $\varphi(x,y)$ (note that the definition of provably total function is extensional). A simple but fundamental observation shows that the same is true for closed $\Sigma^0_2$-axioms: It suffices to note that any true $\Sigma^0_2$-sentence $\exists_x\psi(x)$ follows from some true $\Pi^0_1$-instance $\psi(\overline n)$ (see Proposition~\ref{prop:computational-strength} for details). Note that we may not be able to compute the correct witness~$n\in\mathbb N$; this issue will resurface at the end of the present section.

The general facts from the previous paragraph imply that $\mathbf{PA}+\mathcal K\Sigma^-_1$ has the same computational strength as~$\mathbf{PA}$. At this point it is crucial that we exclude parameters: If the $\Sigma^0_1$-formula $\varphi$ contains further free variables, then the universal closure of~$\mathcal K\varphi$ is a $\Pi^0_3$-formula, so that our argument does no longer apply. Note that the version with parameters can be expressed by a single $\Pi^0_3$-sentence (rather than a scheme), due to the existence of a universal computably enumerable set.

Next, we explain why $\mathbf{PA}+\mathcal K\Sigma^-_1$ is a proper extension of~$\mathbf{PA}$. Based on a notation system for the ordinal~$\varepsilon_0$ (see Section~\ref{sect:basis-to-ti} for details), transfinite induction can be expressed in first order arithmetic: Given a formula $\psi(\alpha)$ with a distinguished free variable, we set
\begin{equation*}
\mathcal{TI}(\varepsilon_0,\psi):\equiv \forall_{\gamma\prec\varepsilon_0}(\forall_{\beta\prec\gamma}\psi(\beta)\rightarrow\psi(\gamma))\rightarrow\forall_{\alpha\prec\varepsilon_0}\psi(\alpha).
\end{equation*}
The scheme of parameter-free $\Pi^0_1$-induction up to~$\varepsilon_0$ is the collection
\begin{equation*}
\mathcal{TI}(\varepsilon_0,\Pi^-_1):=\{\mathcal{TI}(\varepsilon_0,\psi)\,|\,\text{``$\psi$ a $\Pi^0_1$-formula with exactly one free variable"}\}.
\end{equation*}
In Section~\ref{sect:basis-to-ti} we show that each instance of $\mathcal{TI}(\varepsilon_0,\Pi^-_1)$ can be proved in $\mathbf{PA}+\mathcal K\Sigma^-_1$. This is a straightforward consequence of the fact that $\varepsilon_0$ is bounded by (and in fact equal to) the maximal order type of $\mathcal B$. Nevertheless we find it worthwhile to give a detailed proof, which pays attention to the quantifier complexities and the role of parameters. Gentzen~\cite{gentzen36} has used $\Pi^0_1$-induction up to~$\varepsilon_0$ to establish the consistency of~$\mathbf{PA}$. This induction does not require parameters, as we will check in Section~\ref{sect:ti-to-rfn}. Hence the consistency of $\mathbf{PA}$ can be proved in $\mathbf{PA}+\mathcal K\Sigma^-_1$. The latter must thus be a proper extension, due to G\"odel's second incompleteness theorem.

In Section~\ref{sect:reification} we review the proof that $\mathcal B$ has maximal order type $\varepsilon_0$. Based on this fact, we can also show that each instance of $\mathcal K\Sigma^-_1$ is provable in $\mathbf{PA}+\mathcal{TI}(\varepsilon_0,\Pi^-_1)$. To complete the picture, we relate transfinite induction and reflection. Let $\operatorname{Pr}_{\mathbf{PA}}(\varphi)$ be a standard formalization of the statement that the formula with code~$\varphi$ is provable in~$\mathbf{PA}$ (see~\cite[Section~I.4(a)]{hajek91}; we will also write $\varphi$ for $\overline{\ulcorner\varphi\urcorner}$). Given a sentence~$\varphi$ of first order arithmetic, we put
\begin{equation*}
\operatorname{Rfn}_{\mathbf{PA}}(\varphi):\equiv\operatorname{Pr}_{\mathbf{PA}}(\varphi)\rightarrow\varphi.
\end{equation*}
The local (i.\,e.~parameter-free) $\Sigma^0_2$-reflection principle over $\mathbf{PA}$ is the collection
\begin{equation*}
\operatorname{Rfn}_{\mathbf{PA}}(\Sigma^0_2):\equiv\{\operatorname{Rfn}_{\mathbf{PA}}(\varphi)\,|\,\text{``$\varphi$ a closed $\Sigma^0_2$-formula"}\}.
\end{equation*}
Due to G.~Kreisel and A.~L\'evy~\cite{kreisel68}, uniform reflection over~$\mathbf{PA}$ is equivalent to $\varepsilon_0$-induction for formulas with parameters. We will show that the proof can be adapted to the parameter free case. This results in Theorem~\ref{thm:basis-ind-rfl}, which asserts
\begin{equation*}
\mathbf{PA}+\mathcal K\Sigma^-_1\,\equiv\,\mathbf{PA}+\mathcal{TI}(\varepsilon_0,\Pi^-_1)\,\equiv\,\mathbf{PA}+\operatorname{Rfn}_{\mathbf{PA}}(\Sigma^0_2).
\end{equation*}
In view of Goryachev's theorem, we can conclude the following (see Corollary~\ref{cor:goryachev-basis}): Over Peano arithmetic, the $\Pi^0_1$-consequences of $\mathcal K\Sigma^-_1$ are precisely those of the finitely iterated consistency statements for~$\mathbf{PA}$. Due to another result of Kreisel and L\'evy~\cite{kreisel68}, we can also deduce that $\mathbf{PA}+\mathcal K\Sigma^-_1$ is not contained in any consistent extension of~$\mathbf{PA}$ by a computably enumerable set of~$\Pi^0_2$-sentences (see Corollary~\ref{cor:Sigma2-essential}).

\subsubsection*{Acknowledgements.} I am very grateful to Lev Beklemishev for our inspiring discussions and his helpful comments on a first version of this paper.

\section{Foundational considerations}\label{sect:foundational}

In the previous section we have presented an extension of Peano arithmetic by an axiom scheme $\mathcal K\Sigma^-_1$ that is related to Kruskal's theorem. The present section is concerned with the foundational significance of this extension.

Let us first recall some aspects of Hilbert's program; for a more thorough discussion and further references we refer to the introduction by R.~Zach~\cite{zach19}. To secure the abstract methods that are central to modern mathematics, Hilbert wanted to justify them by finitist reasoning about natural numbers, which he views as ``extralogical concrete objects that are intuitively present as immediate experience prior to all throught"~\cite[p.~171]{hilbert-unendliche}. (All quotations from~\cite{hilbert-unendliche,hilbert-grundlagen} are translated as in~\cite{heijenoort}.) The status of the natural numbers entails that certain statements about them are finitistically meaningful. This includes, first of all, statements which assert that a given tuple of numbers satisfies some primitive recursive relation. Such a statement can be verified explicitly, which explains its priviledged role, but also entails---as Hilbert~\cite[p.~165]{hilbert-unendliche} puts it---that it is ``of no essential interest when considered by itself". In addition, one admits universal statements with verifiable instances. According to Hilbert~\cite[p.~173]{hilbert-unendliche}, such a statement can be accepted as ``a hypothetical judgement that comes to assert something when a numeral is given". In contrast, unbounded existential statements are not seen as finitistically meaningful, as ``one cannot [\dots] try out all numbers"~\cite[p.~73]{hilbert-grundlagen}. At the same time, Hilbert~\cite[p.~77f]{hilbert-grundlagen} emphasizes the fact that existential statements play an extremely fruitful role in abstract mathematics. One could even be tempted to say that abstract notions acquire meaning through their role in the mathematical development, a position that seems to resonate with the following statement by Hilbert~\cite[p.~79]{hilbert-grundlagen}:
\begin{quote}
``To make it a universal requirement that each individual formula [\dots] be interpretable by itself is by no means reasonable; on the contrary, a theory by its very nature is such that we do not need to fall back upon intuition or meaning in the midst of some argument."
\end{quote}
However, such a conception of meaning is very different from the finitist one.

The extent of finitist reasoning is commonly identified with primitive recursive arithmetic ($\mathbf{PRA}$). This identification has been justified by W.~Tait~\cite{tait81}; in~\cite{tait02} he lists and refutes some objections. A quantifier-free formulation seems to be most appropriate: In a such a setting, one can only express statements that are finistically meaningful; universal statements correspond to open formulas. To make our considerations as accessible as possible, we will, nevertheless, work in the usual framework of first order arithmetic with quantifiers. Following C.~Smorynski~\cite{smorynski-incompleteness}, we agree to identify the finitistically meaningful statements with the~$\Pi^0_1$-sentences.

More specifically, then, Hilbert's program suggested to formalize all of abstract mathematics as an axiom system~$\mathbf T$. In order to obtain a finitist justification, one was supposed to prove the consistency of $\mathbf T$ in the theory~$\mathbf{PRA}$. At this point it is important to note that consistency is not merely a minimal requirement: If the consistency of a theory $\mathbf T$ is provable in $\mathbf{PRA}$, then the latter proves all \mbox{$\Pi^0_1$-theorems} of $\mathbf T$, i.\,e.~all results that are finitistically meaningful (see~\cite[p.~78f]{hilbert-grundlagen}). G\"odel's incompleteness theorems show that Hilbert's program cannot be carried out: It is impossible for $\mathbf T$ to prove its own consistency; a fortiori, the consistency of $\mathbf T$ cannot be established in the weaker theory~$\mathbf{PRA}$.

Despite G\"odel's theorems, the aims of Hilbert's program have been achieved to an astonishing extent: A substantial part of contemporary mathematics can indeed be formalized in rather weak axiom systems (see e.\,g.~the work of S.~Feferman~\cite{feferman-scientifically-applicable}, as well as U.~Kohlenbach's proof mining program~\cite{kohlenbach-proof-mining}). In view of these positive results, it is all the more intriguing to ask: Are there natural mathematical theorems that can be expressed but not proved in~$\mathbf{PRA}$, or in some stronger theory? To count as a natural theorem, the unprovable statement should arise from mathematical practice; it should not involve the logical notions of proof or model. In particular, consistency statements (which are unprovable by G\"odel's theorem) are not seen as examples of this type.

We do have good examples of true $\Pi^0_2$-statements that are unprovable in relevant axiom systems: The Paris-Harrington principle cannot be proved in Peano arithmetic~\cite{paris-harrington}; Friedman's miniaturization of Kruskal's theorem is independent of an even stronger system~\cite{simpson85}, which is associated with predicative mathematics. The situation is less satisfactory when it comes to $\Pi^0_1$-sentences, which are most important from the finitist viewpoint: The independent statement due to S.~Shelah~\cite{shelah-pi1-independence} involves notions from model theory, so that its status as a natural mathematical theorem can be questioned. Friedman has presented work on \mbox{$\Pi^0_1$-}independence from Zermelo-Fraenkel set theory (see e.\,g.~\cite{friedman-pi01-status}), but his results are not yet published in final form. In our opinion, the search for mathematical $\Pi^0_1$-sentences that are independent of relevant axiom systems remains one of the most interesting challenges in mathematical logic.

The axiom scheme $\mathcal K\Sigma^-_1$ from the previous section does not settle the challenge of natural $\Pi^0_1$-independence. The latter can, nevertheless, serve as a benchmark that helps us to assess the foundational significance of $\mathcal K\Sigma^-_1$. In the rest of this section we carry out such an assessment.

First, we will argue that $\mathcal K\Sigma^-_1$ is a natural mathematical commitment. In the previous section we have seen that $\mathcal K\Sigma^-_1$ is a restricted version of Kruskal's theorem. The latter is firmly established as a natural result of mathematical practice. Hence it remains to argue that the restrictions that lead to $\mathcal K\Sigma^-_1$ are natural as well.

In formulating $\mathcal K\Sigma^-_1$, we have restricted Kruskal's theorem in two ways: Firstly, we have decided to work with binary rather than arbitrary finite trees. This restriction makes it easier to determine the precise strength of~$\mathcal K\Sigma^-_1$ (i.\,e.~to prove the equivalence with transfinite induction and local reflection), but it is not essential: If we extend our axiom scheme to arbitrary finite trees, then it will imply the consistency of stronger axiom systems; at the same time, it will still not increase the computational strength, since it also consists of~$\Sigma^0_2$-statements. The graph minor theorem of N.~Robertson and P.~Seymour~\cite{rob-sey-gm} suggests a very intriguing axiom scheme that is even stronger (cf.~\cite{friedman-robertson-seymour}) but does not have computational strength either (for the same general reason). In summary, the restriction to binary trees is purely pragmatic and does not change the general foundational behaviour. Secondly, the scheme $\mathcal K\Sigma^-_1$ is a restriction of Kruskal's theorem insofar as it demands a finite basis for computably enumerable---rather than arbitrary---sets of trees. In the following we give two justifications for the restriction to computably enumerable sets.

The first justification is that $\mathcal K\Sigma^-_1$ suffices for certain applications in computer science: Assume that $P$ is an upwards closed property of finite binary trees, which means that $P(s)$ and $s\leq_{\mathcal B}t$ imply $P(t)$. Often (but not always, cf.~\cite[Theorem~3]{fellows-langston-88}) one will already know that~$P$ is decidable. Then~$P$ can be defined by a $\Sigma^0_1$-formula, and $\mathcal K\Sigma^-_1$ yields a finite $a\subseteq\mathcal B$ such that $P(t)$ is equivalent to~$\exists_{s\in a}s\leq_{\mathcal B}t$. The latter can be decided in polynomial time (in the size of~$t$). The author knows of no concrete applications in the context of trees, but the analogous argument for the graph minor relation has many applications (see e.\,g.~\cite{fellows-langston-92}).

The second justification for the restriction to computably enumerable sets is based on the idea that one can have reasons to accept $\mathcal K\Sigma^-_1$ but not the full Kruskal theorem for binary trees. To make this plausible we recall that $\mathcal K\Sigma^-_1$ is equivalent to parameter-free $\Pi^0_1$-induction up to~$\varepsilon_0$. The latter is no stronger than induction for decidable (i.\,e.~finitistically meaningful) properties, still up to~$\varepsilon_0$ (see e.\,g.~\cite[Lemma~4.5]{sommer95}). From a finitist standpoint it makes sense to accept this induction principle but not the second order statement that $\varepsilon_0$ is well-founded, which would be required for the binary Kruskal theorem. Indeed, Tait~\cite[p.~411]{tait02} states that Kreisel~\cite{kreisel58} accepts quantifier-free induction up to each ordinal below~$\varepsilon_0$ as finitist. Also, G.~Takeuti's justification of transfinite induction is supposed to ``involve `Gedankenexperimente' [thought experiments] only on clearly defined operations applied to some concretely given figures"~\cite[p.~97]{takeuti-proof-theory}.

Next, we discuss the fact that $\mathcal K\Sigma^-_1$ is a scheme rather than a single statement. In the previous section we have explained that $\mathbf{PA}+\mathcal K\Sigma^-_1$ proves the consistency of~$\mathbf{PA}$. Of course, this proof involves only finitely many instances $\mathcal K\varphi_1,\dots,\mathcal K\varphi_n$. However, we see no basis for the claim that these particular instances constitute a natural mathematical commitment---in contrast to the axiom scheme as a whole. In this sense our reference to an axiom scheme is essential. What does this entail? We think that the answer depends on our attitude towards independence phenomena.

One possibility is to think of independent statements as ``unsolvable conjectures". More explicitly, one might imagine a mathematician immersed in Peano arithmetic, who is challenged to prove or refute the Paris-Harrington principle. The independence result tells us that this mathematician can never succeed. This conception of independence is clearly concerned with single statements rather than schemes. However, one can also think of independence in terms of ``potential axioms". For example, one may view the principle of induction for arbitrary first order formulas as a mathematical commitment beyond the finitist standpoint. This example shows that schemes play a natural role within such a conception of independence.

A broad conception of independence may even incorporate rules, in addition to axiom schemes. In the present context it is interesting to consider the rule
\begin{equation*}
\begin{bprooftree}
\AxiomC{$\forall_{\gamma\prec\varepsilon_0}(\forall_{\beta\prec\gamma}\psi(\beta)\rightarrow\psi(\gamma))$}
\UnaryInfC{$\forall_{\alpha\prec\varepsilon_0}\psi(\alpha)$}
\end{bprooftree}
\end{equation*}
of $\Pi^0_1$-induction along~$\varepsilon_0$, which allows us to infer $\forall_{\alpha\prec\varepsilon_0}\psi(\alpha)$ once we have given a proof of $\forall_{\gamma\prec\varepsilon_0}(\forall_{\beta\prec\gamma}\psi(\beta)\rightarrow\psi(\gamma))$, where $\psi(\alpha)$ can be any $\Pi^0_1$-formula without further free variables. Note that the rule does not commit us to the contrapositive of the corresponding axiom, i.\,e.~to the least element principle. Hence the rule avoids certain existential commitments, which is well motivated in a finitist context. As shown by L.~Beklemishev~\cite[Theorem~3]{beklemishev-provability-algebras}, the closure of $\mathbf{PA}$ under the rule of \mbox{$\Pi^0_1$-}induction along~$\varepsilon_0$ proves the same theorems as the extension of $\mathbf{PA}$ by finitely iterated consistency statements. Note that the rule does not refer to logical notions such as proof or model. Insofar as induction up to $\varepsilon_0$ is a result of mathematical practice, we have a mathematical commitment on the level of $\Pi^0_1$-statements.

Finally, we discuss the fact that $\mathcal K\Sigma^-_1$ consists of $\Sigma^0_2$-statements rather than $\Pi^0_1$-statements. At the end of the previous section we have mentioned that there is no computably enumerable set $\Psi$ of $\Pi^0_1$-sentences (or even $\Pi^0_2$-sentences) such that $\mathbf{PA}+\Psi$ is consistent and contains $\mathbf{PA}+\mathcal K\Sigma^-_1$. This shows that our use of $\Sigma^0_2$-sentences is essential in a rather strong sense.

As mentioned above, many of the known independence results for $\mathbf{PA}$ are concerned with $\Pi^0_2$-sentences. Extending Hilbert's view on $\Pi^0_1$-sentences, one could see $\Pi^0_2$-sentences as ``hypothetical judgement[s]"~\cite[p.~173]{hilbert-unendliche} of complexity~$\Sigma^0_1$. This might suggest that $\Pi^0_2$-sentences are less abstract---in the finitist sense---than \mbox{$\Sigma^0_2$-}statements. From this viewpoint, the independence of $\mathcal K\Sigma^-_1$ would be less significant than the known independence results. An argument that supports the significance of~$\Sigma^0_2$-independence will be given below. First, we give another explanation for the fact that $\Pi^0_2$-independence is more prominent in the existing literature.

Gentzen's ordinal analysis shows that each purported proof of a contradiction can be reduced to a proof with smaller ordinal label. To establish consistency, one can use this reduction in two different ways: In the present paper, we invoke induction on $\alpha\prec\varepsilon_0$ to show that no proof with label~$\alpha$ can produce a contradiction. This avoids parameters but involves a universal quantification over proofs with given ordinal label; it leads to local $\Sigma^0_2$-reflection, which has complexity $\Sigma^0_2$. Alternatively, Gentzen's reduction shows that  a purported proof~$p$ of a contradiction leads to a strictly decreasing sequence of ordinals, which is primitive recursive with parameter~$p$. One can then invoke the primitive recursive well-foundedness of~$\varepsilon_0$. This leads to uniform $\Sigma^0_1$-reflection (see~\cite[Theorem~4.5]{friedman95}), which is a $\Pi^0_2$-statement. It seems that the second approach is preferred in the finitist literature. For example, Takeuti writes that the consistency proof is based on the following~\cite[p.~92]{takeuti-proof-theory}:
\begin{quote}
``Whenever a concrete method of constructing decreasing sequences of ordinals is given, any such decreasing sequence must be finite."
\end{quote}
This preference may help to explain the pre-eminence of $\Pi^0_2$-independence. As an exception, we mention that L.~Beklemishev and A.~Visser~\cite{beklemishev-visser05} have characterized the $\Sigma^0_n$-consequences of $\mathbf{PA}$ (and of its fragments) in terms of iterated reflection. Kreisel~\cite{kreisel-herbrand-symposium} has initiated work on finiteness theorems of complexity~$\Sigma^0_2$, but here the focus is on proof-mining rather than independence.

The significance of $\Sigma^0_2$-independence is related to the notions of provably total function and computational strength, which we have recalled in the previous section. An independent $\Pi^0_2$-statement will typically add a provably total function: For the Paris-Harrington principle this is the case by~\cite[Theorem~3.2]{paris-harrington}; the general claim is plausible in view of \cite[Theorems~2.24 and~4.5]{friedman95} and \cite[Theorem~5]{friedman-proof-length}. In contrast, we have seen that $\mathcal K\Sigma^-_1$ does not increase the computational strength of~$\mathbf{PA}$. 

The fact that $\mathcal K\Sigma^-_1$ does not add provably total functions is interesting in its own right, but it becomes even more relevant in view of the following: The notion of computational strength is a relatively robust extensional invariant. Bounds on provably total functions can be established without the use of G\"odel's theorem, e.\,g.~by induction over cut-free infinite proofs (see~\cite{buchholz-wainer-87}). This means that G\"odel's theorem is not needed to prove that the Paris-Harrington principle is independent of $\mathbf{PA}$ (see \cite{cichon83} for an analogous argument with respect to Goodstein's theorem). It appears that no similar invariants are available on the level of~$\Sigma^0_2$-statements. The only known proof of the fact that $\mathbf{PA}$ does not prove all instances of $\mathcal K\Sigma^-_1$ appeals to G\"odel's theorem. In our opinion, this means that $\mathcal K\Sigma^-_1$ is a conceptually different and foundationally significant manifestation of mathematical independence.

\section{Analyzing the computational strength}

In this section we give a detailed proof of the claim that $\mathcal K\Sigma^-_1$ does not increase the computational strength of~$\mathbf{PA}$. As preparation, we need to show that all instances of $\mathcal K\Sigma^-_1$ are true. In the following remark we argue in a strong meta theory; this will later be superseded by a proof in $\mathbf{PA}+\mathcal{TI}(\varepsilon_0,\Pi^-_1)$ (see Proposition~\ref{prop:ti-to-basis}).

\begin{remark}\label{rmk:KSigma-true}
As a consequence of Kruskal's theorem~\cite{kruskal60}, the partial order $(\mathcal B,\leq_{\mathcal B})$ does not contain any infinite bad sequence. We will use this fact to justify an arbitrary instance
\begin{equation*}
\mathcal K\varphi\equiv\exists^{\operatorname{fin}}_{a\subseteq\mathcal B}(\forall_{s\in a}\varphi(s)\land\forall_{t\in\mathcal B}(\varphi(t)\rightarrow\exists_{s\in a}s\leq_{\mathcal B}t))
\end{equation*}
of the axiom scheme $\mathcal K\Sigma^-_1$. Aiming at a contradiction, assume that $\mathcal K\varphi$ is false. By a bad $\varphi$-sequence we mean a bad sequence $t_0,t_1,\ldots\subseteq\mathcal B$ such that $\varphi(t_i)$ holds for each~$i$. Note that the empty sequence is a bad $\varphi$-sequence. Furthermore, each bad $\varphi$-sequence $t_0,\dots,t_{n-1}$ can be extended into a bad \mbox{$\varphi$-sequence}~$t_0,\dots,t_{n-1},t_n$. To see that this is the case, consider $a:=\{t_0,\dots,t_{n-1}\}$. As $\forall_{s\in a}\varphi(s)$ holds, the assumption that $\mathcal K\varphi$ is false yields an element $t_n\in\mathcal B$ with $\varphi(t_n)$ and $\forall_{s\in a}s\not\leq_{\mathcal B}t_n$. The latter ensures that $t_0,\dots,t_{n-1},t_n$ is still bad. By dependent choice we now get an infinite bad $\varphi$-sequence, which contradicts Kruskal's theorem.
\end{remark}

As explained in the introduction, the following is due to the general fact that $\mathcal K\Sigma^-_1$ consists of true $\Sigma^0_2$-sentences. The argument is folklore, but we provide details in order to make the paper as accessible as possible.

\begin{proposition}\label{prop:computational-strength}
The provably total functions of $\mathbf{PA}+\mathcal K\Sigma^-_1$ and of~$\mathbf{PA}$ coincide.
\end{proposition}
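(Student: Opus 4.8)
The plan is to establish the two inclusions between the classes of provably total functions separately. One direction is immediate: since $\mathbf{PA}$ is a subtheory of $\mathbf{PA}+\mathcal K\Sigma^-_1$, any $\Sigma^0_1$-definition $\varphi(x,y)$ of the graph of a function $f$ with $\mathbf{PA}\vdash\forall_x\exists!_y\varphi(x,y)$ also witnesses that $f$ is provably total in $\mathbf{PA}+\mathcal K\Sigma^-_1$. So the substantive claim is that every function $f$ provably total in $\mathbf{PA}+\mathcal K\Sigma^-_1$ is already provably total in $\mathbf{PA}$.

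Fix such an $f$, with a $\Sigma^0_1$-definition $\varphi(x,y)$ of its graph such that $\mathbf{PA}+\mathcal K\Sigma^-_1\vdash\forall_x\exists!_y\varphi(x,y)$. Since any derivation is finite, it uses only finitely many instances $\mathcal K\varphi_1,\dots,\mathcal K\varphi_k$ of the scheme, so already $\mathbf{PA}+\mathcal K\varphi_1+\dots+\mathcal K\varphi_k\vdash\forall_x\exists!_y\varphi(x,y)$. Each $\mathcal K\varphi_i$ is (provably in $\mathbf{PA}$ equivalent to) a closed $\Sigma^0_2$-sentence, and a conjunction of finitely many closed $\Sigma^0_2$-sentences is equivalent to a single one, obtained by prenexing and contracting the leading existential block and the universal block by pairing. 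Write this single sentence as $\Theta\equiv\exists_y\vartheta(y)$ with $\vartheta$ a $\Pi^0_1$-formula. By Remark~\ref{rmk:KSigma-true} every instance $\mathcal K\varphi_i$ is true, hence so is $\Theta$; thus there is $n\in\mathbb N$ for which $\vartheta(\overline n)$ is a true $\Pi^0_1$-sentence. Putting $\psi:\equiv\vartheta(\overline n)$, we get $\mathbf{PA}+\psi\vdash\Theta$ and therefore $\mathbf{PA}+\psi\vdash\forall_x\exists!_y\varphi(x,y)$.

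It thus suffices to show that adding a true $\Pi^0_1$-sentence $\psi$ does not increase computational strength over $\mathbf{PA}$; applied to the $\psi$ just obtained, this finishes the proof. Since $\psi$ has no free variables, $\mathbf{PA}+\psi\vdash\forall_x\exists_y\varphi(x,y)$ yields, by the deduction theorem and elementary logic, $\mathbf{PA}\vdash\forall_x\exists_y(\psi\to\varphi(x,y))$; and since $\psi$ is $\Pi^0_1$, the formula $\varphi'(x,y):\equiv\psi\to\varphi(x,y)$ is (equivalent to) a $\Sigma^0_1$-formula which, $\psi$ being true, still defines the graph of $f$ in the standard model. To restore provable single-valuedness, write $\varphi'(x,y)\equiv\exists_u\varphi'_0(x,y,u)$ with $\varphi'_0\in\Delta^0_0$ and pass to
\begin{equation*}
\varphi''(x,y):\equiv\exists_u\bigl(\varphi'_0(x,y,u)\land\forall_{\langle z,v\rangle<\langle y,u\rangle}\neg\varphi'_0(x,z,v)\bigr),
\end{equation*}
expressing that $\langle y,u\rangle$ is the least pair, in the standard pairing, with $\varphi'_0(x,y,u)$. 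Because that pairing bounds each of its components, the inner universal quantifier is effectively bounded, so $\varphi''$ is again $\Sigma^0_1$; the least-number principle available in $\mathbf{PA}$ gives $\mathbf{PA}\vdash\forall_x\exists!_y\varphi''(x,y)$; and in the standard model $\{y:\exists_u\varphi'_0(x,y,u)\}=\{f(x)\}$, so $\varphi''$ still defines the graph of $f$. Hence $f$ is provably total in $\mathbf{PA}$. The only delicate points are the bookkeeping about quantifier complexity — verifying that the relevant finite conjunction of instances stays $\Sigma^0_2$, and that the passage from $\varphi$ to $\varphi''$ does not leave $\Sigma^0_1$ while remaining functional — and I expect this to be the main (though routine) obstacle.
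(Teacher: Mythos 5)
Your proposal is correct and follows essentially the same route as the paper: reduce to finitely many instances, replace their conjunction (a true $\Sigma^0_2$-sentence) by a true $\Pi^0_1$-instance, absorb that instance into the $\Sigma^0_1$-graph definition via implication, and restore provable uniqueness by minimizing over the code of the pair of witness components. The quantifier bookkeeping you flag is exactly the point the paper also addresses (minimizing over $y$ alone would leave $\Sigma^0_1$), and your treatment of it is sound.
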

\begin{proof}
Consider a provably total function~$f:\mathbb N\to\mathbb N$ of~$\mathbf{PA}+\mathcal K\Sigma^-_1$. For some $\Sigma^0_1$-definition $\theta(x,y)$ of the graph of $f$, there are $\Sigma^0_1$-formulas $\varphi_0,\dots,\varphi_{n-1}$ (each with a single free variable) such that we have
\begin{equation*}
\mathbf{PA}+\{\mathcal K\varphi_i\,|\,i<n\}\vdash\forall_x\exists!_y\theta(x,y).
\end{equation*}
To show that $f$ is a provably total function of $\mathbf{PA}$, we will define the graph of~$f$ by a modified $\Sigma^0_1$-formula $\theta'(x,y)$ such that $\mathbf{PA}$ alone proves $\forall_x\exists!_y\theta(x,y)'$. For this purpose we observe that the conjunction $\mathcal K\varphi_0\land\dots\land\mathcal K\varphi_{n-1}$ is equivalent to a true $\Sigma^0_2$-sentence~$\exists_m\psi(m)$. Pick a number~$n\in\mathbb N$ such that the $\Pi^0_1$-sentence $\psi(\overline n)$ is true. Then write
\begin{equation*}
\exists_z\theta_0(x,y,z)\equiv\psi(\overline n)\to\theta(x,y)
\end{equation*}
for a $\Delta^0_0$-formula~$\theta_0$. Since $\psi(\overline n)$ is true and implies each instance $\mathcal K\varphi_i$, we do have
\begin{gather*}
f(k)=m\quad\Leftrightarrow\quad\mathbb N\vDash\exists_z\theta_0(\overline k,\overline m,z),\\
\mathbf{PA}\vdash\forall_x\exists_y\exists_z\theta_0(x,y,z).
\end{gather*}
However, if $\mathbf{PA}$ does not prove $\psi(\overline n)$, then it will not prove that the value~$y$ is unique. It is well known that one can restore uniqueness by minimizing over the code of the pair~$\langle y,z\rangle$. Note that minimizing over~$y$ alone would lead out of the \mbox{$\Sigma^0_1$-formulas}: the minimal~$y$ that satisfies $\exists_z\theta_0(x,y,z)$ is specified by a $\Delta^0_2$-formula. To provide details we write $w=\langle y,z\rangle$ for a $\Delta^0_1$-definition of Cantor's pairing function; recall that $w=\langle y,z\rangle$ implies $y,z\leq w$. Let $\theta'(x,y)$ be the $\Sigma^0_1$-formula
\begin{equation*}
\exists_w(\exists_{z\leq w}(w=\langle y,z\rangle\land\theta_0(x,y,z))\land\forall_{w'<w}\forall_{y',z'\leq w'}(w'=\langle y',z'\rangle\rightarrow\neg\theta_0(x,y',z'))).
\end{equation*}
It is straightforward to see that $\theta'$ defines $f$ and that $\mathbf{PA}$ proves $\forall_x\exists!y\theta'(x,y)$.
\end{proof}

\section{From the finite basis property to transfinite induction}\label{sect:basis-to-ti}

In this section we show that $\mathbf{PA}+\mathcal K\Sigma^-_1$ proves each instance of $\mathcal{TI}(\varepsilon_0,\Pi^-_1)$. As we will see, it follows that $\mathbf{PA}+\mathcal K\Sigma^-_1$ is a proper extension of~$\mathbf{PA}$. The result of this section is a relatively straightforward consequence of the existing literature. We provide details in order to demonstrate that the argument works out with respect to formula complexity and the role of parameters.

Let us first recall the usual notation system for ordinals below~$\varepsilon_0$. According to Cantor's normal form theorem, any ordinal~$\alpha$ can be uniquely written as
\begin{equation*}
\alpha=\omega^{\alpha_0}+\dots+\omega^{\alpha_{n-1}}\quad\text{with}\quad\alpha\succeq\alpha_0\succeq\dots\succeq\alpha_{n-1},
\end{equation*}
where $\alpha=0$ arises from~$n=0$. For $\alpha\prec\varepsilon_0=\min\{\gamma\,|\,\omega^\gamma=\gamma\}$ we have $\alpha_0\prec\alpha$. Recursively, this yields finite terms that represent all ordinals below~$\varepsilon_0$. Working in~$\mathbf{PA}$, one can develop basic ordinal arithmetic in terms of the resulting notation system (see e.\,g.~\cite{pohlers-proof-theory,sommer95}). In the following we always refer to term representations rather than actual ordinals.

In the introduction we have defined a set~$\mathcal B$ of binary trees and an embeddability relation $\leq_{\mathcal B}$. To establish a connection with the ordinals below~$\varepsilon_0$, it is convenient to have a binary normal form: If $\alpha\succ 0$ has Cantor normal form as above, we write
\begin{equation*}
\alpha=_{\operatorname{NF}}\omega^\beta+\gamma\qquad\text{for $\beta=\alpha_0$ and $\gamma=\omega^{\alpha_1}+\dots+\omega^{\alpha_{n-1}}$}.
\end{equation*}
Note that $\beta$ and $\gamma$ can be seen as proper subterms of~$\alpha$. The following construction is well-known (cf.~\cite[\S~12]{takeuti-proof-theory}).

\begin{definition}[$\mathbf{PA}$]\label{def:quasi-emb-eps-B}
We construct a function $f:\varepsilon_0\to\mathcal B$ by setting
\begin{equation*}
f(\alpha)=\begin{cases}
\circ & \text{if $\alpha=0$},\\
\circ(f(\beta),f(\gamma)) & \text{if $\alpha=_{\operatorname{NF}}\omega^\beta+\gamma$,}
\end{cases}
\end{equation*}
which amounts to a recursion over term representations of ordinals.
\end{definition}

Concerning the formalization in $\mathbf{PA}$, we note that $f$ is primitive recursive. Hence $f$ is $\mathbf{PA}$-provably total. In particular, the graph of $f$ is $\Delta^0_1$-definable in~$\mathbf{PA}$. The following folklore result shows that $f$ satisfies the definition of a quasi embedding.

\begin{lemma}[$\mathbf{PA}$]\label{lem_emb-eps-B}
For $\alpha,\beta\prec\varepsilon_0$, the inequality $f(\alpha)\leq_{\mathcal B}f(\beta)$ implies $\alpha\preceq\beta$.
\end{lemma}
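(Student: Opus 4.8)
The plan is to argue by induction on the syntactic complexity of the term representing $\beta$ (equivalently, on $\beta$ along $\prec$), proving the slightly more informative statement that handles both the case $\alpha=0$ and the case $\alpha\succ0$ uniformly. First I would dispose of the base case: if $\beta=0$ then $f(\beta)=\circ$, and by the clauses for $\leq_{\mathcal B}$ recalled in the introduction we have $s\leq_{\mathcal B}\circ$ only for $s=\circ$; since $f$ sends only $0$ to $\circ$ (an easy observation, because for $\alpha=_{\operatorname{NF}}\omega^\beta+\gamma$ the value $f(\alpha)=\circ(f(\beta),f(\gamma))$ is not a single node), we get $\alpha=0$, hence $\alpha\preceq\beta$. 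Likewise if $\alpha=0$ then $\alpha\preceq\beta$ trivially, so we may assume both $\alpha,\beta\succ0$.

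Now write the binary normal forms $\alpha=_{\operatorname{NF}}\omega^{\alpha_0}+\alpha_1$ and $\beta=_{\operatorname{NF}}\omega^{\beta_0}+\beta_1$, so that $f(\alpha)=\circ(f(\alpha_0),f(\alpha_1))$ and $f(\beta)=\circ(f(\beta_0),f(\beta_1))$, where $\alpha_0,\alpha_1,\beta_0,\beta_1$ are proper subterms. The hypothesis $f(\alpha)\leq_{\mathcal B}f(\beta)$, together with the displayed recursive characterisation of $\leq_{\mathcal B}$ on trees of the form $\circ(\cdot,\cdot)$, splits into two cases. In the first case, $f(\alpha_0)\leq_{\mathcal B}f(\beta_0)$ and $f(\alpha_1)\leq_{\mathcal B}f(\beta_1)$; applying the induction hypothesis to the subterms gives $\alpha_0\preceq\beta_0$ and $\alpha_1\preceq\beta_1$, and then a routine fact about Cantor normal forms — if $\alpha_0\preceq\beta_0$ and $\alpha_1\preceq\beta_1$ then $\omega^{\alpha_0}+\alpha_1\preceq\omega^{\beta_0}+\beta_1$ — yields $\alpha\preceq\beta$. (Strictly, one also needs that the normal-form side conditions, i.e.\ that $\alpha_1$'s leading exponent is $\preceq\alpha_0$ and similarly for $\beta$, are preserved; this is part of what one checks, but it is immediate from how $f$ was built, since $f$ only receives genuine normal-form terms.) In the second case, $f(\alpha)=\circ(f(\alpha_0),f(\alpha_1))\leq_{\mathcal B}f(\beta_i)$ for some $i\in\{0,1\}$; since $\beta_i$ is a proper subterm of $\beta$, the induction hypothesis applies with $\beta$ replaced by $\beta_i$, giving $\alpha\preceq\beta_i$; but $\beta_i\prec\beta$ for both $i\in\{0,1\}$ (for $i=0$ because $\alpha_0\prec\alpha$ holds below $\varepsilon_0$, applied to $\beta$; for $i=1$ because $\beta_1$ is a proper tail of the normal form), so $\alpha\prec\beta$, which is even stronger than required.

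The one point that needs genuine care — and which I expect to be the main obstacle — is getting the induction to run cleanly inside $\mathbf{PA}$ on term representations rather than on honest ordinals: the recursion defining $f$ is on subterms, and the case distinction for $\leq_{\mathcal B}$ also descends to subterms, so the induction should be formalised as induction on (a code for) the term $\beta$ under the subterm ordering, with the statement $\forall_\alpha(f(\alpha)\leq_{\mathcal B}f(\beta)\to\alpha\preceq\beta)$ as the induction formula; one must check this is a legitimate (primitive recursive, hence $\mathbf{PA}$-available) course-of-values induction, and that in the second case the relevant subterm $\beta_i$ really is strictly smaller in whatever well-founded order underlies the induction. Once that bookkeeping is set up, the two cases above are the complete argument, and each of the two ordinal-arithmetic facts invoked (that $f^{-1}(\circ)=\{0\}$, and monotonicity of $\omega^{(\cdot)}+(\cdot)$ with respect to $\preceq$ on normal-form components) is provable in $\mathbf{PA}$ by the standard development of the $\varepsilon_0$ notation system. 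I would also remark that the converse implication $\alpha\preceq\beta\Rightarrow f(\alpha)\leq_{\mathcal B}f(\beta)$ is false in general — $f$ is only a quasi embedding, not an embedding — so one should not be tempted to prove the stronger biconditional; only the stated direction holds, and only it is needed for the later reification argument.
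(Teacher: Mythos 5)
Your proposal is correct and follows essentially the same route as the paper: an induction descending on the term for $\beta$ (the paper packages this as induction on an explicit height function $h(\beta)$, which is interchangeable with your subterm/course-of-values induction), a case split according to the recursive characterisation of $\leq_{\mathcal B}$ on $\circ(\cdot,\cdot)$, the component-wise case handled by the induction hypothesis plus monotonicity of $\omega^{(\cdot)}+(\cdot)$, and the embed-into-a-subtree case handled by the induction hypothesis for $\beta_i$ together with $\beta_i\preceq\beta$. Your closing remarks (the converse fails, $f$ is only a quasi-embedding) are also accurate.
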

\begin{proof}
Define a height function $h:\varepsilon_0\to\mathbb N$ by recursion over terms, setting
\begin{equation*}
h(\alpha)=\begin{cases}
0 & \text{if $\alpha=0$},\\
\max\{h(\gamma),h(\delta)\}+1 & \text{if $\alpha=_{\operatorname{NF}}\omega^\gamma+\delta$.}
\end{cases}
\end{equation*}
The claim from the lemma can now be verified by induction over $h(\beta)$. For $\alpha=0$ the implication holds because $\alpha\preceq\beta$ is true. In the remaining case we may write $\alpha=_{\operatorname{NF}}\omega^\gamma+\delta$. By the definition of $\leq_{\mathcal B}$, the inequality $f(\alpha)=\circ(f(\gamma),f(\delta))\leq_{\mathcal B}f(\beta)$ fails for $f(\beta)=\circ$. Hence we may also assume $\beta\succ 0$, say $\beta=_{\operatorname{NF}}\omega^{\gamma'}+\delta'$. Again by the definition of $\leq_{\mathcal B}$, the inequality
\begin{equation*}
f(\alpha)=\circ(f(\gamma),f(\delta))\leq_{\mathcal B}\circ(f(\gamma'),f(\delta'))=f(\beta)
\end{equation*}
can hold for two reasons: First assume we have $f(\gamma)\leq_{\mathcal B} f(\gamma')$ and $f(\delta)\leq_{\mathcal B} f(\delta')$. In view of $h(\gamma'),h(\delta')<h(\beta)$, the induction hypothesis yields $\gamma\preceq\gamma'$ and $\delta\preceq\delta'$. By basic ordinal arithmetic we get
\begin{equation*}
\alpha=\omega^\gamma+\delta\preceq\omega^{\gamma'}+\delta'=\beta.
\end{equation*}
Now assume $f(\alpha)\leq_{\mathcal B}f(\beta)$ holds because we have $f(\alpha)\leq_{\mathcal B}f(\gamma')$ or $f(\alpha)\leq_{\mathcal B}f(\delta')$. Inductively we get $\alpha\preceq\gamma'\preceq\omega^{\gamma'}$ or $\alpha\preceq\delta'$. Either way we have $\alpha\preceq\omega^{\gamma'}+\delta'=\beta$.
\end{proof}

In addition to the lemma itself, we will need the following standard consequence:

\begin{corollary}[$\mathbf{PA}$]
The function $f:\varepsilon_0\to\mathcal B$ is injective.
\end{corollary}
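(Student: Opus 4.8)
The plan is to derive injectivity immediately from Lemma~\ref{lem_emb-eps-B} together with reflexivity of $\leq_{\mathcal B}$ and the fact that $\prec$ linearly orders the term representations below~$\varepsilon_0$. Concretely, suppose $f(\alpha)=f(\beta)$ for some $\alpha,\beta\prec\varepsilon_0$. Since $\leq_{\mathcal B}$ is reflexive, we have both $f(\alpha)\leq_{\mathcal B}f(\beta)$ and $f(\beta)\leq_{\mathcal B}f(\alpha)$. Applying Lemma~\ref{lem_emb-eps-B} in each direction yields $\alpha\preceq\beta$ and $\beta\preceq\alpha$. As $\preceq$ is antisymmetric on the notation system (this is part of the basic ordinal arithmetic developed in~$\mathbf{PA}$), we conclude $\alpha=\beta$.

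I would present this as a two-line argument, remarking only that every ingredient---reflexivity of $\leq_{\mathcal B}$, the already-proved Lemma~\ref{lem_emb-eps-B}, and antisymmetry of $\preceq$---is available in $\mathbf{PA}$, so the corollary is indeed provable there. There is no genuine obstacle: the only thing to check is that the quantifier-free/primitive-recursive bookkeeping goes through in~$\mathbf{PA}$, which is routine since $f$ and $\leq_{\mathcal B}$ are $\Delta^0_1$ and the linear order on notations is a standard primitive recursive relation. Thus the main (and essentially sole) work was already done in Lemma~\ref{lem_emb-eps-B}; the corollary is a formal consequence.
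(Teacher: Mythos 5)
Your argument is correct and matches the paper's own proof: both use reflexivity of $\leq_{\mathcal B}$ to get $f(\alpha)\leq_{\mathcal B}f(\beta)$ and $f(\beta)\leq_{\mathcal B}f(\alpha)$, apply Lemma~\ref{lem_emb-eps-B} in both directions, and conclude by antisymmetry of $\preceq$. Nothing further is needed.
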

\begin{proof}
Consider $\alpha,\beta\prec\varepsilon_0$ with $f(\alpha)=f(\beta)$. A straightforward induction over~$\mathcal B$ shows that $\leq_{\mathcal B}$ is reflexive. Hence we have $f(\alpha)\leq_{\mathcal B}f(\beta)$ and $f(\beta)\leq_{\mathcal B}f(\alpha)$. By the previous lemma this implies $\alpha\preceq\beta$ and $\beta\preceq\alpha$. Since the order relation on the ordinals is antisymmetric, we obtain $\alpha=\beta$.
\end{proof}

We can now show that the finite basis property implies transfinite induction. The converse implication will be established in Section~\ref{ref:refl-to-fin-base}.

\begin{proposition}\label{prop:fin-base-to-TI}
Each instance of $\mathcal{TI}(\varepsilon_0,\Pi^-_1)$ can be proved in $\mathbf{PA}+\mathcal K\Sigma^-_1$.
\end{proposition}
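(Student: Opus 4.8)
The plan is to argue by contradiction inside $\mathbf{PA}+\mathcal K\Sigma^-_1$, using the quasi embedding $f\colon\varepsilon_0\to\mathcal B$ from Definition~\ref{def:quasi-emb-eps-B} to transport a putative $\prec$-minimal counterexample of the induction statement into a basis supplied by the relevant instance of the scheme. Fix a $\Pi^0_1$-formula $\psi(\alpha)$ with $\alpha$ as its only free variable, assume the progressiveness hypothesis $\forall_{\gamma\prec\varepsilon_0}(\forall_{\beta\prec\gamma}\psi(\beta)\to\psi(\gamma))$, and suppose towards a contradiction that $\neg\psi(\alpha_0)$ holds for some $\alpha_0\prec\varepsilon_0$. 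The decisive point is to choose the right formula to feed into the axiom scheme: since the graph of $f$ is $\Delta^0_1$-definable and $\neg\psi$ is $\Sigma^0_1$, the formula
\[
\varphi(t):\equiv\exists_{\alpha\prec\varepsilon_0}(f(\alpha)=t\land\neg\psi(\alpha))
\]
is $\Sigma^0_1$ and has exactly the single free variable $t$, so that $\mathcal K\varphi$ is a legitimate instance of $\mathcal K\Sigma^-_1$. Applying it yields a finite $a\subseteq\mathcal B$ with $\forall_{s\in a}\varphi(s)$ and $\forall_{t\in\mathcal B}(\varphi(t)\to\exists_{s\in a}s\leq_{\mathcal B}t)$. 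As $\varphi(f(\alpha_0))$ holds, the second conjunct forces $a$ to be non-empty.

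Next I would pass from the basis to a genuine minimal counterexample, and here it is essential to stay inside $\mathbf{PA}$: one must not invoke a least-element principle for $\prec$ directly, since transfinite $\Sigma^0_1$-induction along $\varepsilon_0$ is precisely what is at stake. Instead, from $\forall_{s\in a}\varphi(s)$ and the finiteness of $a$, the collection scheme (provable in $\mathbf{PA}$) furnishes a bound $b$ with $\forall_{s\in a}\exists_{\alpha<b}(f(\alpha)=s\land\neg\psi(\alpha))$. Then the set $C:=\{\alpha<b\mid\exists_{s\in a}(f(\alpha)=s\land\neg\psi(\alpha))\}$ is non-empty and bounded, so ordinary minimization over a bounded domain — a routine consequence of the least-number principle available in $\mathbf{PA}$ — yields a $\prec$-least element $\alpha^*\in C$. (Note that injectivity of $f$, the Corollary above, is not needed for this.)

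The heart of the argument is the claim $\forall_{\beta\prec\alpha^*}\psi(\beta)$. Suppose not, say $\neg\psi(\beta)$ for some $\beta\prec\alpha^*$; then $\varphi(f(\beta))$ holds, so by the basis property there is $s\in a$ with $s\leq_{\mathcal B}f(\beta)$. Choosing for this $s$ a witness $\alpha<b$ with $f(\alpha)=s$ and $\neg\psi(\alpha)$ (which exists by the choice of $b$), we get $\alpha\in C$, hence $\alpha^*\preceq\alpha$ by minimality, while $f(\alpha)=s\leq_{\mathcal B}f(\beta)$ gives $\alpha\preceq\beta$ by Lemma~\ref{lem_emb-eps-B}; together $\alpha^*\preceq\beta\prec\alpha^*$, a contradiction. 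Thus $\forall_{\beta\prec\alpha^*}\psi(\beta)$, so progressiveness instantiated at $\gamma=\alpha^*$ yields $\psi(\alpha^*)$; but $\alpha^*\in C$ gives $\neg\psi(\alpha^*)$, the final contradiction. Hence $\forall_{\alpha\prec\varepsilon_0}\psi(\alpha)$, which is $\mathcal{TI}(\varepsilon_0,\psi)$.

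The anticipated obstacles are bookkeeping rather than mathematical. First, one must check that $\varphi$ genuinely lies in $\Sigma^0_1$ and carries no hidden parameters, so that $\mathcal K\varphi$ really belongs to the scheme $\mathcal K\Sigma^-_1$; this is where the $\Delta^0_1$-definability of the graph of $f$ is used. Second, and more importantly, the ``minimal counterexample'' step must be carried out via collection followed by bounded minimization, never via a transfinite least-element principle along $\varepsilon_0$ — otherwise the proof would be circular. This second point is the one I expect to require the most care in the write-up, and it is also the place where the formalisation is genuinely sensitive to quantifier complexity.
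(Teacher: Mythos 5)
Your proof is correct and follows essentially the same route as the paper: the same key formula $\varphi(t)\equiv\exists_{\alpha\prec\varepsilon_0}(f(\alpha)=t\land\neg\psi(\alpha))$ is fed into the scheme, and Lemma~\ref{lem_emb-eps-B} is used in the same way to show that the extracted minimal counterexample $\alpha^*$ has no $\prec$-predecessor falsifying $\psi$. The only (harmless) divergences are organizational: the paper argues directly rather than by contradiction, so it must treat the case $a=\emptyset$ separately, and it locates the minimal element by induction on the cardinality of $a$ together with the injectivity of $f$, whereas you first bound the ordinal witnesses via collection and then minimize over them, which lets you read off $\neg\psi(\alpha^*)$ without invoking injectivity at all.
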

\begin{proof}
Working in $\mathbf{PA}+\mathcal K\Sigma^-_1$, we establish $\mathcal{TI}(\varepsilon_0,\psi)$ for a given $\Pi^0_1$-formula $\psi$ with a single free variable. For this purpose we consider the formula
\begin{equation*}
\varphi(t):\equiv t\in\mathcal B\land\exists_{\alpha\prec\varepsilon_0}(f(\alpha)=t\land\neg\psi(\alpha)),
\end{equation*}
where $f:\varepsilon_0\to\mathcal B$ is the function from Definition~\ref{def:quasi-emb-eps-B}. Since the graph of $f$ is \mbox{$\Delta^0_1$-}definable in $\mathbf{PA}$, we see that $\varphi(t)$ is (provably equivalent to) a $\Sigma^0_1$-formula with the single free variable~$t$. Hence we may use $\mathcal K\varphi$ to get (a code for) a finite set $a\subseteq\mathcal B$ that satisfies
\begin{equation*}
\forall_{s\in a}\varphi(s)\land\forall_{t\in\mathcal B}(\varphi(t)\rightarrow\exists_{s\in a}s\leq_{\mathcal B}t).
\end{equation*}
First assume that $a$ is empty. Then $\exists_{s\in a}s\leq_{\mathcal B}t$ fails for all $t\in\mathcal B$, so that the second conjunct enforces~$\forall_{t\in\mathcal B}\neg\varphi(t)$. Given $\alpha\prec\varepsilon_0$, it is straightforward to see that $\neg\varphi(t)$ for $t:=f(\alpha)\in\mathcal B$ implies $\psi(\alpha)$. We thus have $\forall_{\alpha\prec\varepsilon_0}\psi(\alpha)$, which is the conclusion of~$\mathcal{TI}(\varepsilon_0,\psi)$. Now assume that the finite set $a\subseteq\mathcal B$ is non-empty. Due to $\forall_{s\in a}\varphi(s)$, we see that $a$ is contained in the range of~$f$. Also recall that $f$ is injective. By induction on the cardinality of~$a$, one can infer that there is an ordinal $\gamma\prec\varepsilon_0$ with
\begin{equation*}
f(\gamma)\in a\land\forall_{\delta\prec\gamma} f(\delta)\notin a.
\end{equation*}
Given an ordinal $\gamma$ with this property, we now establish
\begin{equation*}
\forall_{\beta\prec\gamma}\psi(\beta)\land\neg\psi(\gamma),
\end{equation*}
which implies that $\mathcal{TI}(\varepsilon_0,\psi)$ holds because its antecedent fails. Aiming at the first conjunct, we consider an ordinal~$\beta\prec\gamma$. If $\psi(\beta)$ was false, then $\varphi(t)$ would hold for~$t:=f(\beta)\in\mathcal B$. Since $a\subseteq\mathcal B$ witnesses the conclusion of $\mathcal K\varphi$, we would get an element $s\in a$ with $s\leq_{\mathcal B}t$. Writing $s=f(\delta)$ with $\delta\prec\varepsilon_0$, we could invoke Lemma~\ref{lem_emb-eps-B} to conclude $\delta\preceq\beta\prec\gamma$. By the above this would imply $s=f(\delta)\notin a$, which yields the desired contradiction. To establish the second conjunct we observe that $f(\gamma)\in a$ implies~$\varphi(f(\gamma))$. According to the definition of~$\varphi$, this means that there is an ordinal $\alpha\prec\varepsilon_0$ with $f(\alpha)=f(\gamma)$ and $\neg\psi(\alpha)$. Since~$f$ is injective we get $\alpha=\gamma$ and thus $\neg\psi(\gamma)$, as required.
\end{proof}

According to Gentzen's ordinal analysis~\cite{gentzen36}, the consistency of Peano arithmetic is provable in $\mathbf{PA}+\mathcal{TI}(\varepsilon_0,\Pi^-_1)$. A detailed proof of a stronger result can be found in the next section. Together with Proposition~\ref{prop:fin-base-to-TI} and G\"odel's theorem, it follows that $\mathbf{PA}+\mathcal K\Sigma^-_1$ is a proper extension of~$\mathbf{PA}$.

\section{From transfinite induction to reflection}\label{sect:ti-to-rfn}

Working over~$\mathbf{PA}$, we show that $\mathcal{TI}(\varepsilon_0,\Pi^-_1)$ implies $\operatorname{Rfn}_{\mathbf{PA}}(\Sigma^0_2)$. The converse direction will be established in Section~\ref{ref:refl-to-fin-base}. The result is rather similar to one by Kreisel and L\'evy~\cite{kreisel68}, who show that induction with parameters corresponds to uniform reflection. The author has found no reference for the parameter-free case. As we will see, the connection with reflection implies that $\mathbf{PA}+\mathcal K\Sigma^-_1$ is not contained in any consistent extension of $\mathbf{PA}$ by a computably enumerable set of $\Pi^0_2$-sentences.

As preparation, we review the ordinal analysis of Peano arithmetic and its formalization in~$\mathbf{PA}$ itself. First note that we cannot formalize the usual soundness argument by induction over formal proofs, since there is no arithmetical truth definition that would cover all relevant formulas (due to Tarski~\cite{tarski36}). Even when we restrict attention to theorems of restricted complexity, their proofs may involve detours through more complex lemmata. The method of cut elimination aims to remove such detours in order to permit a soundness argument that is based on partial truth definitions (cf.~\cite[Section~I.1(d)]{hajek91}). However, it is not immediately possible to eliminate complex lemmata from proofs in Peano arithmetic, which may use complex instances of induction in an essential way. To resolve this problem, ordinal analysis transforms the usual finite proofs into infinite proof trees: In the realm of infinite proofs, induction can be deduced from axioms of low complexity, so that cut elimination becomes possible. Soundness can then be proved by transfinite induction over the rank of infinite proof trees.

Our ordinal analysis works with proofs in a Tait-style sequence calculus. In particular, this means that all formulas are in negation normal form, and that negation is a defined operation based on Morgan's laws. Each node in a proof tree deduces a sequent, i.\,e.~a finite set $\Gamma=\{\varphi_0,\dots,\varphi_{n-1}\}$ of formulas. The latter is to be interpreted as the disjunction $\bigvee\Gamma=\varphi_0\lor\dots\lor\varphi_{n-1}$. In the context of sequents we write $\Gamma,\varphi$ for $\Gamma\cup\{\varphi\}$. Detours in proofs are implemented via the cut rule
\begin{equation*}
\begin{bprooftree}
\AxiomC{$\Gamma,\varphi$}
\AxiomC{$\Gamma,\neg\varphi$}
\RightLabel{,}
\BinaryInfC{$\Gamma$}
\end{bprooftree}
\end{equation*}
which has the following intuitive significance: In order to show $\bigvee\Gamma$, it suffices to
\begin{itemize}
\item prove a lemma~$\varphi$ (more precisely, the left premise proves $\bigvee\Gamma\lor\varphi$) and to
\item prove that $\varphi$ implies $\bigvee\Gamma$ (i.\,e.~to prove $\bigvee\Gamma\lor\neg\varphi$, as in the right premise).
\end{itemize}
The crucial feature of the infinite proof system is the $\omega$-rule
\begin{equation*}
\begin{bprooftree}
\AxiomC{$\Gamma,\varphi(0)$}
\AxiomC{$\Gamma,\varphi(1)$}
\AxiomC{$\cdots$}
\RightLabel{,}
\TrinaryInfC{$\Gamma,\forall_n\varphi(n)$}
\end{bprooftree}
\end{equation*}
which allows to infer $\forall_n\varphi(n)$ if there is a proof of $\varphi(n)$ for each numeral~$n$. Induction can be derived from the $\omega$-rule, since
\begin{equation*}
\varphi(0)\land\forall_m(\varphi(m)\rightarrow\varphi(m+1))\rightarrow\varphi(n)
\end{equation*}
has a straightforward proof for each number~$n$. It follows that any finite proof in Peano arithmetic can be translated (or ``embedded") into the infinite system.

It is not immediately clear how infinite proof trees can be formalized in Peano arithmetic. In the following we recall a very elegant approach due to Buchholz~\cite{buchholz91} (see his paper for all missing details): The idea is to work with a set~$\mathbf Z^*$ of finite terms. Each term names an infinite proof by specifying its role in the cut elimination process. Specifically, each finite proof~$d$ in Peano arithmetic gives rise to a constant symbol~$[d]\in\mathbf Z^*$, which denotes the translation of~$d$ into the infinite system. For each term $h\in\mathbf Z^*$ there is a term $Eh\in\mathbf Z^*$ that names the proof that results from~$h$ by a single application of cut elimination. The intermediate steps of cut elimination give rise to auxiliary function symbols. By primitive recursion over terms one can define an ordinal~$\mathfrak o(h)\prec\varepsilon_0$ that bounds the rank of the proof tree represented by~$h$; for example, the well-known fact that cut elimination leads to an exponential increase of the ordinal rank suggests the recursive clause $\mathfrak o(E h)=\omega^{\mathfrak o(h)}$. Also by recursion over terms, one can determine the end sequent~$\mathfrak e(h)$, the last rule~$\mathfrak r(h)$, the cut rank~$\mathfrak d(h)$, and terms $\mathfrak s(h,n)\in\mathbf Z^*$ that denote the immediate subtrees of the proof tree that is represented by~$h$. Working in~$\mathbf{PA}$ (or even in~$\mathbf{PRA}$), one can show that the term system~$\mathbf Z^*$ is ``locally correct" (see~\cite[Theorem~3.8]{buchholz91}); in particular this means that we have $\mathfrak o(\mathfrak s(h,n))\prec\mathfrak o(s)$, except when $\mathfrak r(s)$ signifies an axiom. To ensure ``global correctness", one needs transfinite induction up to~$\varepsilon_0$, which is not available in~$\mathbf{PA}$. In the sequel we abbreviate
\begin{equation*}
h\vdash^\alpha_0\Gamma\quad:\Leftrightarrow\quad h\in\mathbf Z^*\land\mathfrak o(h)=\alpha\land\mathfrak d(h)=0\land\mathfrak e(h)\subseteq\Gamma.
\end{equation*}
Intuitively, this asserts that~$h$ is a cut-free infinite proof tree with rank~$\alpha$ and end sequent~$\Gamma$ (note that $\bigvee\mathfrak e(h)$ implies $\bigvee\Gamma$). Crucially, the relation $h\vdash^\alpha_0\Gamma$ is primitive recursive and hence $\Delta^0_1$-definable in~$\mathbf{PA}$. This implies that
\begin{equation*}
\mathbf Z^*\vdash^\alpha_0\Gamma\quad:\Leftrightarrow\quad\exists_{h\in\mathbf Z^*}h\vdash^\alpha_0\Gamma
\end{equation*}
is a $\Sigma^0_1$-formula with parameters~$\alpha$ and $\Gamma$. We can now show the promised result:

\begin{proposition}\label{prop:TI-implies-Rfn}
Each instance of $\operatorname{Rfn}_{\mathbf{PA}}(\Sigma^0_2)$ can be proved in $\mathbf{PA}+\mathcal{TI}(\varepsilon_0,\Pi^-_1)$.
\end{proposition}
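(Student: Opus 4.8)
The plan is to formalize Gentzen's ordinal analysis inside $\mathbf{PA}$ and then reflect on cut-free infinite proofs by a single transfinite induction along $\varepsilon_0$, taking care to keep the induction formula $\Pi^0_1$ and free of parameters. Fix a closed $\Sigma^0_2$-sentence; contracting quantifier blocks and inserting dummy quantifiers as needed, we may assume it has the shape $\varphi\equiv\exists_x\forall_y\theta_0(x,y)$ with $\theta_0\in\Delta^0_0$. It suffices to exhibit one $\Pi^0_1$-formula $P$ with a single free variable such that $\mathbf{PA}+\mathcal{TI}(\varepsilon_0,P)\vdash\operatorname{Pr}_{\mathbf{PA}}(\varphi)\to\varphi$, since $\mathcal{TI}(\varepsilon_0,P)$ is then an instance of $\mathcal{TI}(\varepsilon_0,\Pi^-_1)$.

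First I would carry out formalized cut elimination in $\mathbf{PA}$, using the local correctness of Buchholz's term system $\mathbf Z^*$: from a finite $\mathbf{PA}$-proof $d$ of $\varphi$ one forms the embedding $[d]\in\mathbf Z^*$ with end sequent $\{\varphi\}$ and finite cut rank, and iterates the cut-elimination operator $E$ the appropriate (numeral) number of times, obtaining a cut-free term $h^*$ with $\mathfrak e(h^*)=\{\varphi\}$ and $\mathfrak o(h^*)\prec\varepsilon_0$; thus, provably in $\mathbf{PA}$, $\operatorname{Pr}_{\mathbf{PA}}(\varphi)\to\exists_{\alpha\prec\varepsilon_0}\mathbf Z^*\vdash^\alpha_0\{\varphi\}$. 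By the subformula property, which propagates locally from the conclusion to the premises of every non-cut rule, all sequents in a cut-free derivation of $\{\varphi\}$ consist of subformulas of $\varphi$, i.e.~of $\varphi$ itself, of the $\Pi^0_1$-formulas $\forall_y\theta_0(\bar n,y)$, and of $\Delta^0_0$-subformulas of the $\theta_0(\bar n,\bar m)$. Next I would fix a primitive recursive function $\hat c$, defined by recursion on terms of $\mathbf Z^*$, that bounds every existential witness occurring in the proof tree denoted by its argument and satisfies $\hat c(\mathfrak s(h,n))\leq\hat c(h)$; for a sequent $\Gamma$ of subformulas of $\varphi$ and a number $N$ I let $\operatorname{True}_N(\Gamma)$ be the disjunction of $\varphi\in\Gamma\land\exists_{x\leq N}\forall_y\theta_0(x,y)$, of $\exists_n(\forall_y\theta_0(\bar n,y)\in\Gamma\land\forall_y\theta_0(\bar n,y))$, and of ``$\Gamma$ contains a true $\Delta^0_0$-sentence''. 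Since the first existential quantifier is numerically bounded and the other two are bounded by the code of $\Gamma$, while the matrices behind them are $\Pi^0_1$ resp.~$\Delta^0_0$, the formula $\operatorname{True}_N(\Gamma)$ is $\Pi^0_1$. I then take
\[
P(\alpha):\equiv\forall_h\bigl(h\vdash^\alpha_0\mathfrak e(h)\ \land\ \text{``$\mathfrak e(h)$ consists of subformulas of $\varphi$''}\ \rightarrow\ \operatorname{True}_{\hat c(h)}(\mathfrak e(h))\bigr),
\]
whose hypothesis is $\Delta^0_1$ and whose conclusion is $\Pi^0_1$, so that $P(\alpha)$ is (provably equivalent to) a $\Pi^0_1$-formula with $\alpha$ as its only free variable.

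The core of the argument is the proof, in $\mathbf{PA}$, of the premise $\forall_{\gamma\prec\varepsilon_0}(\forall_{\beta\prec\gamma}P(\beta)\to P(\gamma))$ of $\mathcal{TI}(\varepsilon_0,P)$, by case distinction on the last rule $\mathfrak r(h)$ of a cut-free $h$ with $\mathfrak o(h)=\gamma$ whose end sequent consists of subformulas of $\varphi$. For an axiom, $\mathfrak e(h)$ contains a true literal. The propositional rules operate on $\Delta^0_0$-subformulas and are settled by propositional reasoning from the induction hypothesis applied to the finitely many immediate subproofs, whose ordinal labels are smaller by local correctness and whose end sequents again consist of subformulas of $\varphi$. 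The $\exists$-rule introduces $\varphi$ from some $\forall_y\theta_0(\bar n,y)$ with $n\leq\hat c(h)$, and one concludes via the first or second disjunct of $\operatorname{True}$. The crucial case is the $\omega$-rule introducing a formula $\forall_y\theta_0(\bar n,y)\in\mathfrak e(h)$: the induction hypothesis yields $\operatorname{True}_{\hat c(\mathfrak s(h,m))}(\mathfrak e(\mathfrak s(h,m)))$ for every $m$, where the $m$-th premise sequent is contained in $\mathfrak e(h)$ together with $\theta_0(\bar n,\bar m)$; by excluded middle, either for some $m$ one of the disjuncts supported by $\mathfrak e(h)$ already holds---here $\hat c(\mathfrak s(h,m))\leq\hat c(h)$ is used to keep the witness bound of the $\varphi$-disjunct---or else $\theta_0(\bar n,\bar m)$ holds for every $m$, whence $\forall_y\theta_0(\bar n,y)$ and thus the second disjunct of $\operatorname{True}_{\hat c(h)}(\mathfrak e(h))$. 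Feeding this premise into $\mathcal{TI}(\varepsilon_0,P)$ gives $\forall_{\alpha\prec\varepsilon_0}P(\alpha)$; instantiating at the cut-free term $h^*$ with $\mathfrak e(h^*)=\{\varphi\}$, the formula $\operatorname{True}_{\hat c(h^*)}(\{\varphi\})$ collapses to $\exists_{x\leq\hat c(h^*)}\forall_y\theta_0(x,y)$ and hence to $\varphi$. Therefore $\operatorname{Pr}_{\mathbf{PA}}(\varphi)\to\varphi$.

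The main obstacle is precisely the complexity bookkeeping. The naive soundness statement---``every cut-free derivation of a sequent of subformulas of $\varphi$ has a true disjunction''---is $\Sigma^0_2$ because of the leading $\exists_x$ in $\varphi$, and prefixing the universal quantifier over proofs would push it to $\Pi^0_3$, far beyond the reach of parameter-free $\Pi^0_1$-induction. The bounded truth predicate $\operatorname{True}_{\hat c(h)}$ is what repairs this, at the price of having to verify---inside $\mathbf{PA}$ and the term calculus $\mathbf Z^*$---that every existential witness appearing in an infinite proof tree is bounded by a primitive recursive function of its term and that this bound is non-increasing along the $\mathfrak s$-operation; combined with the observation that quantification over members of the finite sequent $\mathfrak e(h)$ is bounded by its code, this is exactly what makes the conclusion $\operatorname{True}_{\hat c(h)}(\mathfrak e(h))$ a $\Pi^0_1$-statement. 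It is here that the argument is genuinely sensitive to the details of the formalization, as the introduction already warns.
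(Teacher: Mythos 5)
Your overall architecture (Buchholz's term system $\mathbf Z^*$, formalized embedding and cut elimination giving $\operatorname{Pr}_{\mathbf{PA}}(\varphi)\to\exists_{\alpha\prec\varepsilon_0}\mathbf Z^*\vdash^\alpha_0\{\varphi\}$, then a single parameter-free $\Pi^0_1$ transfinite induction over cut-free proof terms) is exactly the paper's, and your diagnosis of the complexity problem is the right one. But the device you use to solve it does not exist: there is no primitive recursive $\hat c$ that bounds \emph{every} existential witness occurring in the infinite proof tree denoted by a term $h\in\mathbf Z^*$ and satisfies $\hat c(\mathfrak s(h,m))\leq\hat c(h)$ for all $m$. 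A single finite term denotes an infinite tree, and below the $m$-th premise of an $\omega$-rule the $\exists$-rules may use witnesses that depend on $m$ (this already happens when one embeds a finite proof in which the witnessing term contains the universally quantified variable, e.g.\ a proof of $\forall_x\exists_y\,y=x+1$, and cut elimination does not remove this phenomenon); so the witnesses in the subtrees $\mathfrak s(h,m)$ are unbounded as $m$ varies, and the monotonicity $\hat c(\mathfrak s(h,m))\leq\hat c(h)$ that you invoke in the $\omega$-rule case is exactly what fails. The obstruction is not an artefact of the formalization: if a primitive recursive root bound with your properties existed, then for each $\Delta^0_0$ formula $\psi(z)$ one could primitive-recursively produce a $\mathbf{PA}$-proof of the $\Sigma^0_2$ sentence $\exists_x\forall_z(\psi(z)\rightarrow\exists_{z'\leq x}\psi(z'))$, compute the cut-free term $h^*$ and the bound $N=\hat c(h^*)$, and conclude (by soundness of the true theory $\mathbf{PA}+\mathcal{TI}(\varepsilon_0,\Pi^-_1)$) that $\exists_z\psi(z)\leftrightarrow\exists_{z\leq N}\psi(z)$, making $\Sigma^0_1$ truth decidable. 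So no witness-bounding repair of this shape can work, uniformly or via recursion on $\mathbf Z^*$-terms.

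The paper sidesteps witnesses altogether: arguing for a contradiction, it \emph{assumes that $\varphi$ is false} and then proves, by induction on $\alpha\prec\varepsilon_0$, the statement that every sequent $\Gamma\subseteq\{\varphi\}\cup\Pi^-_1$ with $\mathbf Z^*\vdash^\alpha_0\Gamma$ contains a \emph{true $\Pi^0_1$-sentence}, formalized via a partial truth predicate $\operatorname{Tr}_{\Pi^-_1}$. This induction formula is $\Pi^0_1$ with $\alpha$ as its only free variable (the $\Sigma^0_1$ hypothesis $\mathbf Z^*\vdash^\alpha_0\Gamma$ sits in the antecedent, and $\varphi$ is a fixed numeral), so it is an instance of $\mathcal{TI}(\varepsilon_0,\Pi^-_1)$; the hypothesis $\neg\varphi$ is only used in the $\exists$-rule case, to discard the premise formula $\psi(k)$ without ever needing to bound $k$. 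Applying the induction to the cut-free derivation of $\{\varphi\}$ then yields $\operatorname{Tr}_{\Pi^-_1}(\{\varphi\})$, which is impossible when $\varphi$ is false, and the reflection instance follows. Your write-up would be correct if you replaced the bounded truth predicate $\operatorname{True}_{\hat c(h)}$ by this contradiction-plus-$\operatorname{Tr}_{\Pi^-_1}$ device; as it stands, the construction of $\hat c$ is a genuine gap on which the whole argument rests.
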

\begin{proof}
Consider a closed~$\Sigma^0_2$-formula~$\varphi$. Working in $\mathbf{PA}+\mathcal{TI}(\varepsilon_0,\Pi^-_1)$, we assume that we have $\operatorname{Pr}_{\mathbf{PA}}(\varphi)$. In order to establish $\operatorname{Rfn}_{\mathbf{PA}}(\varphi)$, we need to derive~$\varphi$. We use Buchholz' formalization of ordinal analysis, as discussed above. By embedding and cut elimination (cf.~\cite[Definitions~3.4 and~3.7]{buchholz91}), the assumption $\operatorname{Pr}_{\mathbf{PA}}(\varphi)$ implies
\begin{equation*}
\exists_{\alpha\prec\varepsilon_0}\mathbf Z^*\vdash^\alpha_0\{\varphi\}.
\end{equation*}
Write $\Gamma\subseteq\{\varphi\}\cup\Pi^-_1$ to express that $\Gamma$ is a sequent that consists of $\Pi^0_1$-sentences and (possibly) the formula~$\varphi$. The statement that $\Gamma$ contains a true $\Pi^0_1$-sentence can be expressed by a $\Pi^0_1$-formula $\operatorname{Tr}_{\Pi^-_1}(\Gamma)$ (cf.~\cite[Theorem~I.1.75]{hajek91}). Aiming at a contradiction, we assume that $\varphi$ is false. Under this assumption we will derive
\begin{equation*}
\forall_{\alpha\prec\varepsilon_0}\forall_{\Gamma}(\Gamma\subseteq\{\varphi\}\cup\Pi^-_1\land\mathbf Z^*\vdash^\alpha_0\Gamma\rightarrow\operatorname{Tr}_{\Pi^-_1}(\Gamma)),
\end{equation*}
arguing by transfinite induction on~$\alpha\prec\varepsilon_0$. Note that the sentence~$\varphi$ is represented by a fixed numeral. Hence~$\alpha$ is the only free variable of the induction formula, and the induction is covered by the scheme $\mathcal{TI}(\varepsilon_0,\Pi^-_1)$. Once the induction is carried out, it is starightforward to derive the desired contradiction: By the above we have $\mathbf Z^*\vdash^\alpha_0\{\varphi\}$ for some $\alpha\prec\varepsilon_0$. However, we cannot have $\operatorname{Tr}_{\Pi^-_1}(\{\varphi\})$, since $\varphi$ was assumed to be false (note that this covers both $\varphi\in\Pi^0_1\subseteq\Sigma^0_2$ and $\varphi\in\Sigma^0_2\backslash\Pi^0_1$). It remains to carry out the induction. In the step we consider a sequent $\Gamma\subseteq\{\varphi\}\cup\Pi^-_1$ and assume $h\vdash^\alpha_0\Gamma$ for some $h\in\mathbf Z^*$. We distinguish cases according to the last rule~$\mathfrak r(h)$. Note that this cannot be a cut, since $h\vdash^\alpha_0\Gamma$ entails $\mathfrak d(h)=0$. If $\mathfrak r(h)$ is an axiom, then $\mathfrak e(h)\subseteq\Gamma$ contains a true literal (cf.~\cite[Definition~2.2]{buchholz91}). To complete the proof, we consider the introduction of a quantifier; the introduction of a propositional connective is similar and simpler. First assume that $h$ ends with an $\omega$-rule, which introduces a formula $\forall_n\theta(n)\in\Gamma$. Due to $\Gamma\subseteq\{\varphi\}\cup\Pi^-_1$ we see that $\forall_n\theta(n)$ must be a $\Pi^0_1$-sentence. Local correctness (see~\cite[Theorem~3.8]{buchholz91}) yields
\begin{equation*}
\mathbf Z^*\vdash^{\mathfrak o(\mathfrak s(h,n))}_0\Gamma,\theta(n)\quad\text{with}\quad\mathfrak o(\mathfrak s(h,n))\prec \mathfrak o(h)=\alpha
\end{equation*}
for all $n\in\mathbb N$. The induction hypothesis implies that each sequent $\Gamma,\theta(n)$ contains a true $\Pi^0_1$-sentence. Hence we get such a sentence in $\Gamma$, or all instances $\theta(n)$ are true. In the latter case, it follows that $\Gamma$ contains the true $\Pi^0_1$-sentence $\forall_n\theta(n)$. Finally, assume that $\mathfrak r(h)$ introduces an existential formula~$\exists_n\psi(n)$. In view of $\Gamma\subseteq\{\varphi\}\cup\Pi^-_1$ we must have $\exists_n\psi(n)\equiv\varphi$ (note that~\cite{buchholz91} does not work with bounded quantifiers but treats primitive recursive relations as atomic). By local correctness there is some existential witness~$k\in\mathbb N$ such that we have
\begin{equation*}
\mathbf Z^*\vdash^{\mathfrak o(\mathfrak s(h,0))}_0\Gamma,\psi(k)\quad\text{with}\quad\mathfrak o(\mathfrak s(h,0))\prec \mathfrak o(h)=\alpha.
\end{equation*}
The induction hypothesis yields a true $\Pi^0_1$-sentence in $\Gamma,\psi(k)$. To establish $\operatorname{Tr}_{\Pi^-_1}(\Gamma)$ it suffices to show that $\psi(k)$ cannot be true: if it was, then $\varphi\equiv\exists_n\psi(n)$ would be true as well, which contradicts our assumption.
\end{proof}

The following proof is similar to one by Kreisel and L\'evy~\cite[\S~8]{kreisel68} (see~\cite[Lemma~2]{beklemishev97-local} for an argument that takes the formula complexity into account).

\begin{corollary}\label{cor:Sigma2-essential}
There is no computably enumerable set $\Psi$ of $\Pi^0_2$-sentences such that $\mathbf{PA}+\Psi$ is consistent and contains $\mathbf{PA}+\mathcal K\Sigma^-_1$. In particular, the latter is a proper extension of $\mathbf{PA}$.
\end{corollary}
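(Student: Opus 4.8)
The plan is to reduce the statement to the reflection formulation and then to run the Kreisel--L\'evy argument in the complexity-sensitive shape due to Beklemishev. By Theorem~\ref{thm:basis-ind-rfl}, a theory contains $\mathbf{PA}+\mathcal K\Sigma^-_1$ precisely if it proves every instance of the parameter-free scheme $\operatorname{Rfn}_{\mathbf{PA}}(\Sigma^0_2)$. So it suffices to show: if $\Psi$ is a computably enumerable set of closed $\Pi^0_2$-sentences and $T:=\mathbf{PA}+\Psi$ proves every instance of $\operatorname{Rfn}_{\mathbf{PA}}(\Sigma^0_2)$, then $T$ is inconsistent. Since $T$ is a computably enumerable extension of~$\mathbf{PA}$, this will follow from G\"odel's second incompleteness theorem once we have forced $T$ to prove $\operatorname{Con}(T)$.

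First I would arithmetise the set-up. Fix a primitive recursive enumeration $\psi_0,\psi_1,\dots$ of $\Psi$ and put $\theta_k:=\psi_0\land\dots\land\psi_{k-1}$, which is again a closed $\Pi^0_2$-sentence, with $k\mapsto\ulcorner\theta_k\urcorner$ primitive recursive. Formalising the deduction theorem in~$\mathbf{PA}$ yields $\operatorname{Pr}_T(\varphi)\leftrightarrow\exists_k\operatorname{Pr}_{\mathbf{PA}}(\theta_k\to\varphi)$, and for every $\Sigma^0_2$-sentence~$\varphi$ the formula $\theta_k\to\varphi$ is provably (and primitive recursively in~$k$) equivalent to a $\Sigma^0_2$-sentence, because $\neg\theta_k$ is $\Sigma^0_2$ and a disjunction of two $\Sigma^0_2$-sentences is again $\Sigma^0_2$. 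The naive argument then runs, working inside~$T$: given $\operatorname{Pr}_T(\varphi)$ for a $\Sigma^0_2$-sentence~$\varphi$, pick $k$ with $\operatorname{Pr}_{\mathbf{PA}}(\theta_k\to\varphi)$, apply the instance of $\operatorname{Rfn}_{\mathbf{PA}}(\Sigma^0_2)$ for $\theta_k\to\varphi$ to obtain $\theta_k\to\varphi$, and conclude $\varphi$ since $\theta_k$ is a conjunction of axioms of~$T$. Taking $\varphi$ to be $0=1$ this would give $T\vdash\operatorname{Con}(T)$.

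The main obstacle is that this last argument is not literally correct, and repairing it is the heart of the matter: inside~$T$ the index $k$ is a variable, so ``the instance for $\theta_k\to\varphi$'' would have to be invoked for a possibly nonstandard~$k$, and one cannot simply feed a $\Sigma^0_2$-truth predicate under the existential quantifier --- by Corollary~\ref{cor:goryachev-basis}, $\mathbf{PA}+\mathcal K\Sigma^-_1$ is far weaker than uniform $\Sigma^0_2$-reflection over~$\mathbf{PA}$, and moreover $T$ need not prove that all of its own $\Pi^0_2$-axioms are true. The fix is exactly what \cite[Lemma~2]{beklemishev97-local} carries out in refinement of \cite[\S~8]{kreisel68}: instead of appealing to the whole reflection scheme one isolates a single, carefully chosen $\Sigma^0_2$-sentence, produced by the diagonal lemma, which internalises the search for an offending~$\theta_k$ in such a way that one concrete instance of $\operatorname{Rfn}_{\mathbf{PA}}(\Sigma^0_2)$ suffices to deliver the contradiction. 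I expect the delicate point to be verifying that the complexities line up --- that the parameter-free $\Sigma^0_2$-reflection supplied by $\mathbf{PA}+\mathcal K\Sigma^-_1$ is precisely what this diagonal argument consumes.

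For the ``in particular'': the empty set is a (computably enumerable) set of $\Pi^0_2$-sentences and $\mathbf{PA}=\mathbf{PA}+\emptyset$ is consistent, so the first assertion applied with $\Psi=\emptyset$ shows that $\mathbf{PA}$ does not contain $\mathbf{PA}+\mathcal K\Sigma^-_1$; hence the latter is a proper extension of~$\mathbf{PA}$.
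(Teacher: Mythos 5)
Your reduction to reflection is the same first move as the paper's, and your diagnosis of why the naive internalisation fails (the index $k$ becomes a variable inside $T$, possibly nonstandard, and no uniform $\Sigma^0_2$-truth definition is available) is accurate. But at exactly that point your argument stops: the ``carefully chosen $\Sigma^0_2$-sentence produced by the diagonal lemma'' is never constructed, and verifying that one concrete parameter-free instance of $\operatorname{Rfn}_{\mathbf{PA}}(\Sigma^0_2)$ really forces $T$ to prove its own consistency---with the complexities lining up, which you yourself flag as the delicate point---is the entire content of the step you defer to \cite{beklemishev97-local}. As written, this is a citation of (essentially) the result to be proved rather than a proof of it, so there is a genuine gap at the heart of the argument.

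For comparison, the paper avoids any hand-made diagonalisation by invoking Lindstr\"om \cite[Theorem~4]{lindstroem-interpretability84}: there is a \emph{single} $\Pi^0_2$-sentence $\psi$ such that $\mathbf{PA}+\psi$ is a $\Sigma^0_2$-conservative extension of $\mathbf{PA}+\Psi$. Since $\mathbf{PA}+\psi$ then contains $\mathbf{PA}+\operatorname{Rfn}_{\mathbf{PA}}(\Sigma^0_2)$ (only the direction given by Propositions~\ref{prop:fin-base-to-TI} and~\ref{prop:TI-implies-Rfn} is needed, not all of Theorem~\ref{thm:basis-ind-rfl}), one applies the single instance $\operatorname{Rfn}_{\mathbf{PA}}(\neg\psi)$---note that $\neg\psi$ is one fixed closed $\Sigma^0_2$-sentence, so the nonstandard-index problem never arises---to get $\mathbf{PA}+\psi\vdash\neg\operatorname{Pr}_{\mathbf{PA}}(\neg\psi)$, i.e.\ $\mathbf{PA}+\psi$ proves its own consistency and is inconsistent by G\"odel's second incompleteness theorem; since a contradiction is in particular a $\Sigma^0_2$-consequence, conservativity transfers the inconsistency back to $\mathbf{PA}+\Psi$. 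So the missing idea in your write-up is precisely this replacement of the computably enumerable set $\Psi$ by a single partially conservative $\Pi^0_2$-sentence (or, if you insist on the Kreisel--L\'evy/Beklemishev route, the explicit fixed-point sentence playing the same role, together with the verification that it is $\Sigma^0_2$ and that one reflection instance closes the argument). Your treatment of the ``in particular'' clause via $\Psi=\emptyset$ is fine.
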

\begin{proof}
Consider a computably enumerable set $\Psi$ of $\Pi^0_2$-sentences such that $\mathbf{PA}+\Psi$ proves each instance of~$\mathcal K\Sigma^-_1$. We need to show that $\mathbf{PA}+\Psi$ is inconsistent. According to~\cite[Theorem~4]{lindstroem-interpretability84}, there is a single $\Pi^0_2$-sentence~$\psi$ such that $\mathbf{PA}+\psi$ is a $\Sigma^0_2$-conservative extension of $\mathbf{PA}+\Psi$. In view of conservativity, it suffices to show that $\mathbf{PA}+\psi$ is inconsistent. By Propositions~\ref{prop:fin-base-to-TI} and~\ref{prop:TI-implies-Rfn} we have
\begin{equation*}
\mathbf{PA}+\operatorname{Rfn}_{\mathbf{PA}}(\Sigma^0_2)\,\subseteq\,\mathbf{PA}+\mathcal{TI}(\varepsilon_0,\Pi^-_1)\,\subseteq\,\mathbf{PA}+\mathcal K\Sigma^-_1\,\subseteq\,\mathbf{PA}+\Psi\,\subseteq\,\mathbf{PA}+\psi.
\end{equation*}
Hence we can invoke local $\Sigma^0_2$-reflection to get
\begin{equation*}
\mathbf{PA}+\psi\vdash\operatorname{Pr}_{\mathbf{PA}}(\neg\psi)\to\neg\psi.
\end{equation*}
The contrapositive yields $\mathbf{PA}+\psi\vdash\neg\operatorname{Pr}_{\mathbf{PA}}(\neg\psi)$. This means that $\mathbf{PA}+\psi$ proves its own consistency, so that it is inconsistent by G\"odel's theorem.
\end{proof}

Since any true $\Sigma^0_2$-sentence follows from a true $\Pi^0_1$-sentence, there is a set $\Xi$ of $\Pi^0_1$-sentences such that $\mathbf{PA}+\Xi$ is consistent and contains~$\mathbf{PA}+\mathcal K\Sigma^-_1$. The corollary tells us that $\Xi$ cannot be computably enumerable.

\section{A primitive recursive reification}\label{sect:reification}

In the rest of this paper we complete the proof that $\mathcal K\Sigma^-_1$, $\mathcal{TI}(\varepsilon_0,\Pi^-_1)$ and $\operatorname{Rfn}_{\mathbf{PA}}(\Sigma^0_2)$ are equivalent over~$\mathbf{PA}$. The present section is concerned with a technical result that will be crucial for this purpose.

Write $\operatorname{Bad}(\mathcal B)$ for the set of non-empty finite bad sequences in~$\mathcal B$. We want to construct a primitive recursive function $r:\operatorname{Bad}(\mathcal B)\rightarrow\varepsilon_0$ such that we have
\begin{equation*}
r(\langle t_0,\dots,t_n,t_{n+1}\rangle)\prec r(\langle t_0,\dots,t_n\rangle)
\end{equation*}
whenever $\langle t_0,\dots,t_{n+1}\rangle$ is an element of $\operatorname{Bad}(\mathcal B)$, provably in~$\mathbf{PA}$. Such a function is called a reification. It ensures that $\mathcal B$ is a well partial order with maximal order type at most (and in fact equal to)~$\varepsilon_0$.

As mentioned in the introduction, the result that $\mathcal B$ has maximal order type~$\varepsilon_0$ is due to de Jongh and Schmidt. Experience shows that maximal order types can be witnessed by effective reifications. For the case of finite (and in particular~binary) trees this has been established by M.~Rathjen and A.~Weiermann~\cite[Section~2]{rathjen-weiermann-kruskal}. Unfortunately, we cannot simply cite their result: In~\cite{rathjen-weiermann-kruskal} it is shown that $\mathbf{ACA_0}$ proves the existence of a reification; however, it is not entirely trivial to see that the constructed reification is (primitive) recursive. In the rest of this section we verify this fact in detail. Some readers may prefer to skip this verification and to continue with the applications in the next section. We point out that the following presentation is influenced by the more general construction in~\cite{hasegawa94}.

The reification of $\mathcal B$ will depend on reifications of various other orders. In the context of first order arithmetic it helps to think of these orders as types, which are represented by finite expressions.

\begin{definition}[$\mathbf{PA}$]\label{def:types}
The following recursive clauses generate a collection of types and a subcollection of indecomposable types:
\begin{enumerate}[label=(\roman*)]
\item The symbols $\mathfrak B$ and $\mathfrak E$ are indecomposable types.
\item If $A,B$ are types, then~$A+B$ is a type.
\item If $A,B$ are indecomposable types, then $A\times B$ is an indecomposable type.
\item If $A$ is any type, then $A^*$ is an indecomposable type.
\end{enumerate}
\end{definition}

Note that it is not allowed to form types such as $(A+B)\times C$, since $A+B$ is not indecomposable. This will become important in the proof of Proposition~\ref{prop:subalgs-smaller}. The elements of our orders are represented by terms of the corresponding types. To obtain primitive recursive constructions, it is crucial to work with terms of all types simultaneously. For example, it is neither possible nor necessary to construct all terms of type~$A$ before one constructs a term of type~$A^*$. We do not specify terms of type $\mathfrak E$, because the latter is supposed to represent the empty order.

\begin{definition}[$\mathbf{PA}$]
The following recursive clauses generate a collection of terms. We simultaneously specify the types of these terms:
\begin{enumerate}[label=(\roman*)]
\item Each binary tree $t\in\mathcal B$ is a term of type~$\mathfrak B$.
\item If $a$ is a term of type $A$ and $B$ is a type, then $\iota_0^Ba$ is a term of type $A+B$. If $b$ is a term of type $B$ and $A$ is a type, then $\iota_1^Ab$ is a term of type $A+B$.
\item If $a$ and $b$ are terms of types~$A$ and $B$, then $\langle a,b\rangle$ is a term of type $A\times B$.
\item If $a_0,\dots,a_{n-1}$ have type $A$, then $\langle a_0,\dots,a_{n-1}\rangle_A$ is a term of type~$A^*$.
\end{enumerate}
Note that~(iii) does only apply when $A$ and $B$ are indecomposable.
\end{definition}

One readily constructs a G\"odel numbering $\#$ with the monotonicity properties
\begin{gather*}
\#s,\#t<\#{\circ(s,t)}\text{ for $s,t\in\mathfrak B$},\qquad\#a<\#\iota_0^Ba,\qquad\#b<\#\iota_1^Ab,\\
\#a,\#b<\#\langle a,b\rangle,\qquad\#a_0,\#\langle a_1,\dots,a_n\rangle_A<\#\langle a_0,\dots,a_n\rangle_A.
\end{gather*}
We will use this G\"odel numbering to construct primitive recursive functions by course-of-values recursion. Binary functions can be constructed with the help of the Cantor pairing function, which is monotone in both components. For example, the following definition decides $a\leq_A a'$ by recursion over the code of $\langle\#a,\#a'\rangle$.

\begin{definition}[$\mathbf{PA}$]
The relation $a\leq_A a'$ between terms $a$ and $a'$ of the same type~$A$ is generated by the following recursive clauses (i.\,e.~it is the smallest relation that satisfies them):
\begin{enumerate}[label=(\roman*)]
\item If $s\leq_{\mathcal B} t$, then $s\leq_{\mathfrak B} t$.
\item If $a\leq_A a'$, then $\iota_0^Ba\leq_{A+B}\iota_0^Ba'$. If $b\leq_B b'$, then $\iota_1^Ab\leq_{A+B}\iota_1^Ab'$.
\item If $a\leq_A a'$ and $b\leq_B b'$, then $\langle a,b\rangle\leq_{A\times B}\langle a',b'\rangle$.
\item If there is a strictly increasing $f:\{0,\dots,m-1\}\rightarrow\{0,\dots,n-1\}$ such that $a_i\leq_A a'_{f(i)}$ holds for all $i<m$, then $\langle a_0,\dots,a_{m-1}\rangle_A\leq_{A^*}\langle a'_0,\dots,a'_{n-1}\rangle_A$.
\end{enumerate}
\end{definition}

Let us record the expected property:

\begin{lemma}[$\mathbf{PA}$]
Each relation $\leq_A$ is a partial order on the terms of type~$A$.
\end{lemma}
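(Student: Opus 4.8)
The plan is to prove reflexivity, transitivity and antisymmetry of $\leq_A$ separately, in each case by course-of-values induction on a single G\"odel number: on $\#a$ for reflexivity and antisymmetry, and on $\#a'$ (the ``middle'' term) for transitivity. The monotonicity properties of $\#$ recorded above guarantee that in every generating clause the immediate subterms receive strictly smaller codes, so these inductions match the recursion that defines $\leq_A$. Each of the three statements amounts to a $\Pi^0_1$-assertion about codes (``$a$ is a term of type~$A$'' is $\Delta^0_1$, and so is every $a\leq_A a'$), and course-of-values induction for $\Pi^0_1$-formulas is available in $\mathbf{PA}$; hence the whole argument formalizes there. Throughout I would use the (routine) fact that the generating clauses are type-directed, so that in $a\leq_A a'$ the outer shapes of $a$ and $a'$ coincide and a clean case analysis on the last clause applied is possible.

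For reflexivity I would run through the four clauses. If $a=s$ has type~$\mathfrak B$, then $s\leq_{\mathfrak B}s$ by clause~(i) and the (easily checked) reflexivity of $\leq_{\mathcal B}$. If $a=\iota_0^B a_0$ or $a=\iota_1^A b_0$, apply the induction hypothesis to $a_0$ resp.~$b_0$; if $a=\langle a_0,b_0\rangle$, apply it to both components. If $a=\langle a_0,\dots,a_{m-1}\rangle_A$, take $f$ to be the identity on $\{0,\dots,m-1\}$ and use the induction hypothesis to supply $a_i\leq_A a_i$ for each $i<m$.

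For transitivity, assume $a\leq_A a'$ and $a'\leq_A a''$. Clauses (i)--(iii) reduce at once to transitivity of $\leq_{\mathcal B}$, respectively to the induction hypothesis applied with the subterms of $a'$ as middle terms (their codes are $<\#a'$). The case that needs care is~(iv): here $\langle a_0,\dots,a_{m-1}\rangle_A\leq_{A^*}\langle a'_0,\dots,a'_{k-1}\rangle_A$ via a strictly increasing $f\colon\{0,\dots,m-1\}\to\{0,\dots,k-1\}$ with $a_i\leq_A a'_{f(i)}$, and $\langle a'_0,\dots,a'_{k-1}\rangle_A\leq_{A^*}\langle a''_0,\dots,a''_{n-1}\rangle_A$ via a strictly increasing $g\colon\{0,\dots,k-1\}\to\{0,\dots,n-1\}$ with $a'_j\leq_A a''_{g(j)}$. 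Then $g\circ f$ is strictly increasing, and since $\#a'_{f(i)}<\#a'$ the induction hypothesis gives $a_i\leq_A a''_{g(f(i))}$ for each $i<m$; clause~(iv) applied to $g\circ f$ yields $\langle a_0,\dots,a_{m-1}\rangle_A\leq_{A^*}\langle a''_0,\dots,a''_{n-1}\rangle_A$.

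For antisymmetry, assume $a\leq_A a'$ and $a'\leq_A a$. Cases (i)--(iii) are immediate from antisymmetry of $\leq_{\mathcal B}$ and from the induction hypothesis on the subterms (codes $<\#a$). In case~(iv) we have strictly increasing maps $f\colon\{0,\dots,m-1\}\to\{0,\dots,n-1\}$ and $g\colon\{0,\dots,n-1\}\to\{0,\dots,m-1\}$ with $a_i\leq_A a'_{f(i)}$ and $a'_j\leq_A a_{g(j)}$; then $g\circ f$ and $f\circ g$ are strictly increasing self-maps of finite linear orders, hence both equal the identity, which forces $m=n$ and $f=g=\mathrm{id}$. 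Consequently $a_i\leq_A a'_i$ and $a'_i\leq_A a_i$, so the induction hypothesis gives $a_i=a'_i$ for all $i<m$, i.e.~$a=a'$. The only genuine obstacle anywhere in the lemma is this bookkeeping with the increasing maps in the $A^*$-clause; everything else is a routine structural induction.
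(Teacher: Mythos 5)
Your proof is correct and follows essentially the same route as the paper: separate course-of-values inductions on G\"odel numbers, matching the recursion that defines $\leq_A$, with the partial-order properties of $\leq_{\mathcal B}$ taken as known for the base type and the strictly increasing maps handled explicitly in the $A^*$-clause. The only (immaterial) difference is the choice of induction measure -- the paper uses $\#a$, $\#a+\#a'$ and $\#a+\#a'+\#a''$ for the three properties, while you use $\#a$ for reflexivity and antisymmetry and the code of the middle term for transitivity, which works just as well since the relevant subterms always have strictly smaller codes.
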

\begin{proof}
First check $a\leq_A a$ by induction over~$\#a$, simultaneously for all types~$A$. Then use induction over $\#a+\#a'$ to verify that $a\leq_A a'$ and $a'\leq_A a$ imply $a=a'$. Finally, show $a\leq_A a'\,\&\, a'\leq_A a''\Rightarrow a\leq_Aa''$ by induction over $\#a+\#a'+\#a''$.
\end{proof}

From now on we write $a\in A$ to express that $a$ is a term of type~$A$. Despite this notation, one should keep in mind that $A$ is a finite expression rather than an infinite set. The following provides a substitute for the ``missing" types~$A\times B$.

\begin{definition}[$\mathbf{PA}$]
For arbitrary types~$A$ and $B$ we recursively define a type $A\otimes B$ and terms $[a,b]\in A\otimes B$ for all $a\in A$ and $b\in B$: First put
\begin{equation*}
A\otimes B=A\times B\quad\text{and}\quad[a,b]=\langle a,b\rangle\quad\text{when $A,B$ are indecomposable}.
\end{equation*}
Now consider $A=C+D$ and an arbitrary~$B$. To save parentheses, we assume that $\otimes$ binds stronger than~$+$. We then define
\begin{equation*}
(C+D)\otimes B=C\otimes B+D\otimes B\quad\text{and}\quad[\iota_0^Dc,b]=\iota_0^{D\otimes B}[c,b],\,[\iota_1^Cd,b]=\iota_1^{C\otimes B}[d,b].
\end{equation*}
For indecomposable~$A$ and $B=C+D$ we set
\begin{equation*}
A\otimes(C+D)=A\otimes C+A\otimes D\quad\text{and}\quad[a,\iota_0^Dc]=\iota_0^{A\otimes D}[a,c],\,[a,\iota_1^Cd]=\iota_1^{A\otimes C}[a,d].
\end{equation*}
\end{definition}

The following is readily checked by induction on $\#a+\#a'+\#b+\#b'$.

\begin{lemma}[$\mathbf{PA}$]
We have
\begin{equation*}
[a,b]\leq_{A\otimes B}[a',b']\qquad\Leftrightarrow\qquad a\leq_A a'\text{ and }b\leq_B b'
\end{equation*}
for arbitrary terms $a,a'\in A$ and $b,b'\in B$.
\end{lemma}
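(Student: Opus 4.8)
The plan is to argue by induction on the combined G\"odel number $\#a+\#a'+\#b+\#b'$, following the recursive definition of $A\otimes B$ and of the pairing terms $[a,b]$. The statement splits into two directions, and at each stage of the recursion the relevant clause of the definition of $\leq_{A\otimes B}$ is forced, so both directions can be handled at once.

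First I would dispose of the base case in which both $A$ and $B$ are indecomposable. Here $A\otimes B=A\times B$ and $[a,b]=\langle a,b\rangle$ by definition, so the claim reduces to the equivalence $\langle a,b\rangle\leq_{A\times B}\langle a',b'\rangle\Leftrightarrow a\leq_A a'\wedge b\leq_B b'$. The implication from right to left is exactly clause~(iii) in the definition of $\leq_A$. For the converse one notes that, since $\leq_{A\times B}$ is the \emph{smallest} relation closed under the generating clauses, the only clause that can put $\langle a,b\rangle$ and $\langle a',b'\rangle$ in the relation is clause~(iii); hence a derivation of $\langle a,b\rangle\leq_{A\times B}\langle a',b'\rangle$ must have as its immediate premises $a\leq_A a'$ and $b\leq_B b'$. (This ``inversion'' step is routine: it is the standard observation that an inductively generated relation can be analysed by its last rule, and the shapes of the terms determine which rule applies.)

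Next I would treat the two recursive cases, which are symmetric. Suppose $A=C+D$ and $B$ is arbitrary; the case of indecomposable $A$ and $B=C+D$ is identical after swapping the roles. By definition $(C+D)\otimes B=C\otimes B+D\otimes B$, and a term $a\in C+D$ has one of the two forms $\iota_0^D c$ or $\iota_1^C d$; likewise for $a'$. If $a=\iota_0^D c$ and $a'=\iota_1^C d'$ (or vice versa), then $[a,b]$ lies in the left summand and $[a',b']$ in the right summand of $C\otimes B+D\otimes B$, so by the definition of $\leq$ on a sum neither $[a,b]\leq[a',b']$ nor $a\leq_{C+D}a'$ can hold, and both sides of the equivalence are false. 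If $a=\iota_0^D c$ and $a'=\iota_0^D c'$, then $[a,b]=\iota_0^{D\otimes B}[c,b]$ and $[a',b']=\iota_0^{D\otimes B}[c',b']$, so by clause~(ii) we have $[a,b]\leq_{(C+D)\otimes B}[a',b']$ iff $[c,b]\leq_{C\otimes B}[c',b']$, and the latter is equivalent to $c\leq_C c'\wedge b\leq_B b'$ by the induction hypothesis (the codes have strictly decreased, since $\#c<\#\iota_0^D c=\#a$ and $\#c'<\#a'$). Again invoking that clause~(ii) is the only generating clause that applies to two terms both of the form $\iota_0^D(\cdots)$, this is in turn equivalent to $\iota_0^D c\leq_{C+D}\iota_0^D c'\wedge b\leq_B b'$, i.e.\ to $a\leq_A a'\wedge b\leq_B b'$, as required. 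The subcase $a=a'=\iota_1^C d$, $a'=\iota_1^C d'$ is handled the same way using the right summand.

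I do not expect a genuine obstacle here: the only point requiring a little care is the repeated use of the fact that an inductively defined order relation is characterised by its generating clauses, so that from $x\leq_T y$ one may read off which clause was applied last and hence recover the premises — and that this reasoning is available inside $\mathbf{PA}$, since all the relations involved are $\Delta^0_1$ and the recursions are primitive recursive. The bookkeeping to ensure the induction measure $\#a+\#a'+\#b+\#b'$ strictly decreases in each recursive call follows immediately from the monotonicity properties of the G\"odel numbering $\#$ recorded just before the statement. Accordingly, the proof is the one-line ``induction on $\#a+\#a'+\#b+\#b'$'' already indicated in the text, and I would present it at roughly that level of detail.
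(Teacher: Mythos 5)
Your proof is correct and follows exactly the route the paper intends: the paper gives no details beyond the remark that the lemma ``is readily checked by induction on $\#a+\#a'+\#b+\#b'$'', and your argument is precisely that induction, with the base case handled by clause~(iii) plus inversion and the sum cases by unfolding the recursive clauses for $\otimes$ and $[\cdot,\cdot]$. (Only a harmless typo in the last subcase, which should read $a=\iota_1^C d$, $a'=\iota_1^C d'$.)
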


For $a\in A$ we will abbreviate
\begin{equation*}
a'\in A_a\quad :\Leftrightarrow\quad a'\in A\text{ and }a\not\leq_A a'.
\end{equation*}
The sets $A_a$ are important for the analysis of maximal order types, because they contain all elements that can follow $a$ in a bad sequence. In our setting it will be important to have a quasi embedding of $A_a$ into a suitable type $A(a)$. To save parentheses we agree on $A\otimes B\otimes C=(A\otimes B)\otimes C$ and $[a,b,c]=[[a,b],c]$. The following construction is similar to the one in~\cite[Definition~5.3 and Example~5.4]{hasegawa94}.

\begin{definition}[$\mathbf{PA}$]\label{def:type-A(a)}
By recursion over $\#a$ we define a type $A(a)$ for each $a\in A$:
\begin{enumerate}[label=(\roman*)]
\item We have $\mathfrak B(\circ)=\mathfrak E$ and $\mathfrak B(\circ(s,t))=(\mathfrak B(s)+\mathfrak B(t))^*$.
\item We have $(A+B)(\iota_0^Ba)=A(a)+B$ and $(A+B)(\iota_1^Ab)=A+B(b)$.
\item We have $(A\times B)(\langle a,b\rangle)=A(a)\otimes B+A\otimes B(b)$.
\item We have $A^*(\langle\rangle_A)=\mathfrak E$ and
\begin{equation*}
A^*(\langle a_0,\dots,a_n\rangle_A)=A(a_0)^*+A(a_0)^*\otimes A\otimes A^*(\langle a_1,\dots,a_n\rangle_A).
\end{equation*}
\end{enumerate}
\end{definition}

As promised, we get the following quasi embeddings:

\begin{proposition}\label{prop:A_a-into-A(a)}
There is a primitive recursive function $e$ such that $\mathbf{PA}$ proves the following: For any type $A$ and terms $a\in A, b\in A_a$ we have $e_A(a,b)=e(a,b)\in A(a)$ (note that $A$ can be inferred from $a$). Furthermore we have
\begin{equation*}
e_A(a,b)\leq_{A(a)}e_A(a,b')\quad\Rightarrow\quad b\leq_A b'
\end{equation*}
for any terms $b,b'\in A_a$.
\end{proposition}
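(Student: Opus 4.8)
The plan is to construct $e_A(a,b)$ by recursion on the structure of the type $A$ (equivalently, by course-of-values recursion on $\#a$, simultaneously for all types), following exactly the case split in Definition~\ref{def:type-A(a)}, and to verify the quasi-embedding property by a parallel induction. Primitive recursiveness will be automatic from the fact that each recursive call is made on a term with strictly smaller G\"odel number, so I would set up the recursion against the monotonicity properties of $\#$ recorded after Definition~2.10 and not belabour this point. The substantive content is the definition of $e$ in each case, together with the verification that it reflects the order.

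I would proceed case by case. For $A=\mathfrak B$: if $a=\circ$ then $\mathfrak B(\circ)=\mathfrak E$, but there is no $b\in\mathfrak B_\circ$ since $\circ\leq_{\mathfrak B}t$ for every tree $t$, so the clause is vacuous. If $a=\circ(s_0,s_1)$ and $b\in\mathfrak B_a$, then $a\not\leq_{\mathfrak B}b$; writing $b=\circ$ or $b=\circ(t_0,t_1)$ and unfolding the recursive definition of $\leq_{\mathcal B}$, one sees that $b$ decomposes into finitely many "pieces" each of which sits in $\mathfrak B(s_0)=\mathfrak B_{s_0}$-territory or $\mathfrak B(s_1)$-territory — more precisely one extracts a sequence of subtrees of $b$, each reflecting into $\mathfrak B(s_0)+\mathfrak B(s_1)$ via $e$ on the components, and assembles them into a term of $(\mathfrak B(s_0)+\mathfrak B(s_1))^*=\mathfrak B(a)$. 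For $A=C+D$ and $a=\iota_0^Dc$: a term $b\in A_a$ must be of the form $\iota_0^Dc'$ with $c'\in C_c$ (if $b=\iota_1^Cd$ then $a\leq_A b$ fails trivially — wait, rather $a\not\leq b$ automatically, so $b=\iota_1^Cd$ is in $A_a$), so I split: $b=\iota_0^Dc'$ goes to $\iota_0^{B}\,e_C(c,c')\in C(c)+D$, and $b=\iota_1^Cd$ goes to $\iota_1^{C(c)}d\in C(c)+D=A(a)$; the $\iota_1$ case matches since elements on the "other side" of the sum are always incomparable. For $A=C\times D$ (indecomposable) and $a=\langle c,d\rangle$: $b=\langle c',d'\rangle\in A_a$ means $c\not\leq_C c'$ or $d\not\leq_D d'$; in the first case send $b$ to $\iota_0[\,e_C(c,c'),d'\,]\in C(c)\otimes D+C\otimes D(d)$, in the second to $\iota_1[c',e_D(d,d')\,]$, using the $\otimes$-lemma to see both land in $A(a)$. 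For $A=B^*$ and $a=\langle a_0,\dots,a_n\rangle_A$: a bad-sequence-like analysis of $a\not\leq_{B^*}b$ for $b=\langle b_0,\dots,b_m\rangle_A$ shows that (matching $a_0$ as far left as possible into $b$) either $b$ already "avoids" $a_0$, giving a term of $B(a_0)^*$, or there is a least index where $a_0$ can be absorbed and the remaining tail must avoid $\langle a_1,\dots,a_n\rangle_A$, giving a term of $B(a_0)^*\otimes B\otimes B^*(\langle a_1,\dots,a_n\rangle_A)$; recursion on $\langle a_1,\dots,a_n\rangle_A$ (smaller $\#$) handles the tail and $e_B(a_0,-)$ handles the absorbed parts.

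In each case the reflection property $e_A(a,b)\leq_{A(a)}e_A(a,b')\Rightarrow b\leq_A b'$ is checked by unfolding the definition of $\leq_{A(a)}$, observing that comparability in $A(a)$ forces $b,b'$ into the same summand, and then invoking the inductive reflection property of $e$ on the components (for $\otimes$, via the $\otimes$-lemma; for $(-)^*$, via clause~(iv) of the definition of $\leq_{A^*}$, matching up the strictly increasing injections). The induction is on $\#b+\#b'$ together with structural recursion on $A$.

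The main obstacle is the $B^*$ case: making precise the "match $a_0$ as far left as possible" decomposition of a term $b$ with $\langle a_0,\dots,a_n\rangle_A\not\leq_{B^*}b$, and checking that the resulting map into $B(a_0)^*+B(a_0)^*\otimes B\otimes B^*(\langle a_1,\dots,a_n\rangle_A)$ is primitive recursive (it involves searching for a least splitting index, which is fine) and genuinely order-reflecting. This is where the restriction that $\otimes$ is only formed with the left factor indecomposable, and the careful shape of clause~(iv) of Definition~\ref{def:type-A(a)}, get used; it is also the place where the argument most closely parallels \cite[Definition~5.3 and Example~5.4]{hasegawa94}, so I would follow that construction, adapting it to track the G\"odel-number bounds needed for primitive recursiveness. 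The $\mathfrak B$ case is a special instance of the same phenomenon (since $\mathfrak B(\circ(s,t))$ is itself a $(-)^*$-type) and can be treated by the same decomposition once the $B^*$ case is in hand.
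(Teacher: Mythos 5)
Your proposal is correct and follows essentially the same route as the paper: the same simultaneous course-of-values recursion on codes, the same clauses for sum and product types, the same least-index (``greedy'') decomposition with a tail recursion for the star case, and the same parallel induction for the order-reflection property. The only cosmetic difference is that the paper treats the $\mathfrak B$ case by its own direct recursion (prepending the datum $e_{\mathfrak B}(s_i,t_i)$ for the failing subtree and descending into the other child of $b$ with the full $a$) rather than deferring it to the star case, but this is exactly the sequence-of-failure-witnesses decomposition you sketch.
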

\begin{proof}
The value $e_A(a,b)$ is defined by recursion over the code of the pair~$\langle\#a,\#b\rangle$, simultaneously for all types~$A$. Once the construction of $e$ is complete, the second part of the proposition can be verified by induction on $\#a+\#b+\#b'$. In the following we distinguish cases according to the form of~$a$.

First consider $a=\circ\in\mathfrak B=A$. Since $\circ\leq_{\mathfrak B}t$ is true for any $t\in\mathfrak B$, the set $A_a$ is empty and there are no values to define. Now assume $a=\circ(s_0,s_1)\in\mathfrak B=A$. For the term $b=\circ\in\mathfrak B$ we put
\begin{equation*}
e_{\mathfrak B}(\circ(s_0,s_1),\circ)=\langle\rangle_{\mathfrak B(s_0)+\mathfrak B(s_1)}\in(\mathfrak B(s_0)+\mathfrak B(s_1))^*=\mathfrak B(\circ(s_0,s_1)).
\end{equation*}
Now assume that we have $b=\circ(t_0,t_1)\in\mathfrak B$. The condition $b\in A_a$ amounts to $\circ(s_0,s_1)\not\leq_{\mathfrak B}\circ(t_0,t_1)$, which yields $s_0\not\leq_{\mathfrak B}t_0$ or $s_1\not\leq_{\mathfrak B}t_1$. Let us assume that we have $s_0\not\leq_{\mathfrak B}t_0$, which amounts to $t_0\in\mathfrak B_{s_0}$. We may then refer to the recursively defined value
\begin{equation*}
e_{\mathfrak B}(s_0,t_0)\in\mathfrak B(s_0).
\end{equation*}
More formally, the recursive definition of $e_A(a,b)$ and the inductive verification of $e_A(a,b)\in A(a)$ should be separated. In order to do so, we can agree on a default value for the hypothetical case that the decidable property $e_{\mathfrak B}(s_0,t_0)\in\mathfrak B(s_0)$ fails; the induction shows that the default value is never called. By $\circ(s_0,s_1)\not\leq_{\mathfrak B}\circ(t_0,t_1)$ we also have $\circ(s_0,s_1)\not\leq_{\mathfrak B}t_1$, which amounts to $t_1\in\mathfrak B_{\circ(s_0,s_1)}$ and provides
\begin{equation*}
e_{\mathfrak B}(\circ(s_0,s_1),t_1)\in\mathfrak B(\circ(s_0,s_1))=(\mathfrak B(s_0)+\mathfrak B(s_1))^*.
\end{equation*}
Let us agree to write $c_0\star\langle c_1,\dots,c_n\rangle_C:=\langle c_0,c_1,\dots,c_n\rangle_C\in C^*$ for terms~$c_0,\dots,c_n$ of a type~$C$. We can now state our recursive clause as
\begin{equation*}
e_{\mathfrak B}(\circ(s_0,s_1),\circ(t_0,t_1))=\begin{cases}
\iota_0^{\mathfrak B(s_1)}e_{\mathfrak B}(s_0,t_0)\star e_{\mathfrak B}(\circ(s_0,s_1),t_1) & \text{if $s_0\not\leq_{\mathfrak B}t_0$},\\[1ex]
\iota_1^{\mathfrak B(s_0)}e_{\mathfrak B}(s_1,t_1)\star e_{\mathfrak B}(\circ(s_0,s_1),t_0) & \text{otherwise}.
\end{cases}
\end{equation*}
To explain the second case we recall that $s_1\not\leq_{\mathfrak B}t_1$ must hold if $s_0\not\leq_{\mathfrak B}t_0$ fails.

Before we state the other recursive clauses, let us verify that the second part of the proposition holds for $A=\mathfrak B$. As above we write $a=\circ(s_0,s_1)$. In the case of the term $b'=\circ$ we observe
\begin{equation*}
e_{\mathfrak B}(a,b)\leq_{\mathfrak B(s)}e_{\mathfrak B}(a,b)=\langle\rangle_{\mathfrak B(s_0)+\mathfrak B(s_1)}\quad\Rightarrow\quad e_{\mathfrak B}(a,b)=\langle\rangle_{\mathfrak B(s_0)+\mathfrak B(s_1)}.
\end{equation*}
The consequent of this implication can only hold for $b=\circ$. In this case $b\leq_{\mathfrak B}b'$ is satisfied for any $b'\in\mathfrak B$. Hence it remains to consider terms of the form $b=\circ(t_0,t_1)$ and $b'=\circ(t_0',t_1')$. In general we have
\begin{equation*}
c\star\sigma\leq_{C^*}c'\star\sigma'\quad\Leftrightarrow\quad c\star\sigma\leq_{C^*}\sigma'\text{ or }(c\leq_C c'\text{ and }\sigma\leq_{C^*}\sigma').
\end{equation*}
First assume that $e_{\mathfrak B}(s,b)\leq_{\mathfrak B(s)}e_{\mathfrak B}(s,b')$ holds because of $e_{\mathfrak B}(s,b)\leq_{\mathfrak B(s)}e_{\mathfrak B}(s,t_i')$. Then the induction hypothesis yields $b\leq_{\mathfrak B}t_i'$, which implies $b\leq_{\mathfrak B}\circ(t_0',t_1')=b'$. Now assume we have $e_{\mathfrak B}(s,b)\leq_{\mathfrak B(s)}e_{\mathfrak B}(s,b')$ because there are $i,j\in\{0,1\}$ with
\begin{align*}
\iota_i^{\mathfrak B(s_{1-i})}e_{\mathfrak B}(s_i,t_i)&\leq_{\mathfrak B(s_0)+\mathfrak B(s_1)}\iota_j^{\mathfrak B(s_{1-j})}e_{\mathfrak B}(s_j,t_j'),\\[1ex]
e_{\mathfrak B}(s,t_{1-i})&\leq_{\mathfrak B(s)}e_{\mathfrak B}(s,t_{1-j}').
\end{align*}
The first inequality can only hold for $i=j$. It yields $e_{\mathfrak B}(s_i,t_i)\leq_{\mathfrak B(s_i)}e_{\mathfrak B}(s_i,t_i')$, which implies $t_i\leq_{\mathfrak B}t_i'$ by induction hypothesis. From the second inequality we can infer $t_{1-i}\leq_{\mathfrak B}t_{1-i}'$. Together we get $b=\circ(t_0,t_1)\leq_{\mathfrak B}\circ(t_0',t_1')=b'$, as desired.

Sum and product types are considerably easier to handle. We only state the recursive clauses and leave all verifications to the reader:
\begin{gather*}
\begin{aligned}
e_{A+B}(\iota_0^Ba,\iota_0^Ba')&=\iota_0^B e_A(a,a'),\quad &  e_{A+B}(\iota_0^Ba,\iota_1^Ab')&=\iota_1^{A(a)}b',\\[1ex]
e_{A+B}(\iota_1^Ab,\iota_0^Ba')&=\iota_0^{B(b)}a', & e_{A+B}(\iota_1^Ab,\iota_1^Ab')&=\iota_1^A e_B(b,b').
\end{aligned}\\[1ex]
e_{A\times B}(\langle a,b\rangle,\langle a',b'\rangle)=\begin{cases}
\iota_0^{A\otimes B(b)}[e_A(a,a'),b'] & \text{if $a\not\leq_A a'$},\\[1ex]
\iota_1^{A(a)\otimes B}[a',e_B(b,b')] & \text{otherwise}.
\end{cases}
\end{gather*}

Finally, we consider the case of a type~$A^*$. For $a=\langle\rangle_A\in A^*$ it suffices to observe that $(A^*)_a$ is empty, since $\langle\rangle_A\leq_{A^*}\tau$ holds for any $\tau\in A^*$. Now consider a term of the form $a=a_0\star\sigma\in A^*$. We write $b=\langle b_0,\dots,b_{n-1}\rangle_A\in(A^*)_a$ and distinguish two cases. If we have $a_0\not\leq_A b_i$ for all $i<n$, then we set
\begin{equation*}
e_{A^*}(a,b)=\iota_0^{A(a_0)^*\otimes A\otimes A^*(\sigma)}\langle e_A(a_0,b_0),\dots,e_A(a_0,b_{n-1})\rangle_{A(a_0)}.
\end{equation*}
Note that this is an element of $A(a_0)^*+A(a_0)^*\otimes A\otimes A^*(\sigma)=A^*(a)$, as required. Otherwise we fix the smallest number $i<n$ with $a_0\leq_A b_i$. In view of $b\in(A^*)_a$ we must have $\sigma\not\leq_{A^*}\langle b_{i+1},\dots,b_{n-1}\rangle_A$. We can thus define $e_{A^*}(a,b)$  as
\begin{equation*}
\iota_1^{A(a_0)^*}[\langle e_A(a_0,b_0),\dots,e_A(a_0,b_{i-1})\rangle_{A(a_0)},b_i,e_{A^*}(\sigma,\langle b_{i+1},\dots,b_{n-1}\rangle_A)].
\end{equation*}
Using the induction hypothesis, one readily checks that $e_{A^*}(a,b)\leq_{A^*(a)}e_{A^*}(a,b')$ implies $b\leq_{A^*}b'$.
\end{proof}

Our next aim is to iterate the previous construction along bad sequences. Given a type~$A$, we write $\sigma\in\operatorname{Bad}^+(A)$ to express that $\sigma$ is a finite bad sequence in~$A$. This means that we have $\sigma=\langle a_0,\dots,a_{n-1}\rangle$ for terms $a_0,\dots,a_{n-1}\in A$ that satisfy $a_i\not\leq_A a_j$ for all $i<j<n$. If we have $\sigma\in\operatorname{Bad}^+(A)$ and $\sigma$ is different from the empty sequence~$\langle\rangle$, then we write $\sigma\in\operatorname{Bad}(A)$. For $\sigma=\langle a_0,\dots,a_{n-1}\rangle\in\operatorname{Bad}^+(A)$ we abbreviate $\sigma^\frown a=\langle a_0,\dots,a_{n-1},a\rangle$ and put
\begin{equation*}
a\in A_\sigma\qquad:\Leftrightarrow\qquad a\in A\text{ and }\sigma^\frown a\in\operatorname{Bad}(A).
\end{equation*}
The expressions $A(a)$ and $e_A(a,b)$ have only been explained for $a\in A$ and $b\in A_a$. We will see that the following definition does conform with these restrictions. In order to state the definition it is, nevertheless, helpful to realize that the primitive recursive functions $(A,a)\mapsto A(a)$ and $(A,a,b)\mapsto e_A(a,b)$ can be extended to arbitrary arguments.

\begin{definition}[$\mathbf{PA}$]\label{def:bad-seq-types}
Consider a type~$A$. For a sequence $\sigma\in\operatorname{Bad}^+(A)$ and a term $b\in A_\sigma$ we define $A[\sigma]$ and $\hat e_A(\sigma,b)$ by the recursive clauses
\begin{align*}
A[\langle\rangle]&=A, & A[\sigma^\frown a]&=A[\sigma](\hat e_A(\sigma,a)),\\
\hat e_A(\langle\rangle,b)&=b, & \hat e_A(\sigma^\frown a,b)&=e_{A[\sigma]}(\hat e_A(\sigma,a),\hat e_A(\sigma,b)).
\end{align*}
\end{definition}

In order to justify the recursion in detail, we consider $\sigma=\langle a_0,\dots,a_{n-1}\rangle$ and write $\sigma\!\restriction\!i=\langle a_0,\dots,a_{i-1}\rangle$. Then $A[\sigma\!\restriction\!i]$ and the values $\hat e_A(\sigma\!\restriction\!i,a_j)$ for $i\leq j<n$ are constructed simultaneously by recursion on~$i<n$. For $\sigma':=\sigma^\frown a_n$ with $a_n:=b$ this also explains the value $\hat e_A(\sigma,b)=\hat e_A(\sigma'\!\restriction\!n,a_n)$.

\begin{corollary}[$\mathbf{PA}$]\label{cor:bad-seq-types}
If $\sigma$ is a finite bad sequence in the type $A$, then $A[\sigma]$ is a type. For any $b\in A_\sigma$ the value $\hat e_A(\sigma,b)$ is a term of this type. Furthermore we have
\begin{equation*}
\hat e_A(\sigma,b)\leq_{A[\sigma]}\hat e_A(\sigma,b')\quad\Rightarrow\quad b\leq_A b'
\end{equation*}
for any terms $b,b'\in A_\sigma$. 
\end{corollary}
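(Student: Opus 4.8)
The plan is to prove all three assertions of the corollary simultaneously by induction on the length $n$ of the bad sequence $\sigma$; since sequences and types are coded by natural numbers, this is ordinary induction in $\mathbf{PA}$. For the base case $\sigma=\langle\rangle$ we have $A[\langle\rangle]=A$, which is a type, and $A_{\langle\rangle}=A$, since $\langle b\rangle$ is vacuously bad for every $b\in A$; the value $\hat e_A(\langle\rangle,b)=b$ is then a term of type $A$, and the implication degenerates to $b\leq_A b'\Rightarrow b\leq_A b'$.

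For the inductive step I would consider a bad sequence of length $n+1$, written as $\sigma^\frown a$ with $\sigma\in\operatorname{Bad}^+(A)$ of length $n$ and $a\in A_\sigma$. By the induction hypothesis $A[\sigma]$ is a type and $\hat e_A(\sigma,a)$ is a term of type $A[\sigma]$, so $A[\sigma^\frown a]=A[\sigma](\hat e_A(\sigma,a))$ is a type by Definition~\ref{def:type-A(a)}. Now fix $b\in A_{\sigma^\frown a}$, that is, $(\sigma^\frown a)^\frown b\in\operatorname{Bad}(A)$. Deleting the entry $a$ from this bad sequence leaves the bad sequence $\sigma^\frown b$, whence $b\in A_\sigma$ and $a\not\leq_A b$; by the induction hypothesis $\hat e_A(\sigma,b)$ is a term of type $A[\sigma]$. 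Since $a\not\leq_A b$, the contrapositive of the third (quasi-embedding) clause of the induction hypothesis gives $\hat e_A(\sigma,a)\not\leq_{A[\sigma]}\hat e_A(\sigma,b)$, i.e.\ $\hat e_A(\sigma,b)\in A[\sigma]_{\hat e_A(\sigma,a)}$. Proposition~\ref{prop:A_a-into-A(a)}, applied to the type $A[\sigma]$ and its element $\hat e_A(\sigma,a)$, then shows that
\[
\hat e_A(\sigma^\frown a,b)=e_{A[\sigma]}(\hat e_A(\sigma,a),\hat e_A(\sigma,b))
\]
is a term of $A[\sigma](\hat e_A(\sigma,a))=A[\sigma^\frown a]$, as needed.

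For the last assertion at length $n+1$, let $b,b'\in A_{\sigma^\frown a}$ and assume $\hat e_A(\sigma^\frown a,b)\leq_{A[\sigma^\frown a]}\hat e_A(\sigma^\frown a,b')$. As above, $b,b'\in A_\sigma$ and $\hat e_A(\sigma,b),\hat e_A(\sigma,b')\in A[\sigma]_{\hat e_A(\sigma,a)}$; unfolding Definition~\ref{def:bad-seq-types}, the assumption reads $e_{A[\sigma]}(\hat e_A(\sigma,a),\hat e_A(\sigma,b))\leq_{A[\sigma](\hat e_A(\sigma,a))}e_{A[\sigma]}(\hat e_A(\sigma,a),\hat e_A(\sigma,b'))$. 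Hence the second part of Proposition~\ref{prop:A_a-into-A(a)} yields $\hat e_A(\sigma,b)\leq_{A[\sigma]}\hat e_A(\sigma,b')$, and the induction hypothesis for the quasi-embedding clause then gives $b\leq_A b'$, which closes the induction.

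I do not expect a genuine obstacle: the corollary is just Proposition~\ref{prop:A_a-into-A(a)} iterated along $\sigma$, and Definition~\ref{def:bad-seq-types} was arranged precisely so that each step satisfies the hypotheses of that proposition. The one point that needs care is the bookkeeping: the three assertions must be carried through the induction together, because showing that $\hat e_A(\sigma^\frown a,b)$ has the correct type already relies on the incomparability $\hat e_A(\sigma,a)\not\leq_{A[\sigma]}\hat e_A(\sigma,b)$, which comes from the quasi-embedding clause of the induction hypothesis, not merely from the clause about types. One should also record explicitly the elementary fact that a subsequence of a bad sequence is again bad, which is what licenses passing from $b\in A_{\sigma^\frown a}$ to $b\in A_\sigma$ together with $a\not\leq_A b$.
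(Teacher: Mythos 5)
Your proof is correct and follows essentially the same route as the paper: induction on $\sigma$, carrying all three claims simultaneously, with Proposition~\ref{prop:A_a-into-A(a)} applied to $A[\sigma]$ and $\hat e_A(\sigma,a)$ at each step. Your explicit remarks that a subsequence of a bad sequence is bad (so $b\in A_{\sigma^\frown a}$ gives $b\in A_\sigma$ and $a\not\leq_A b$) and that the typing claim needs the quasi-embedding clause of the induction hypothesis are exactly the points the paper's proof uses, just stated more tersely there.
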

\begin{proof}
We use induction on~$\sigma$ to verify all claims simultaneously. The case of $\sigma=\langle\rangle$ is immediate. Now assume that we have $\sigma={\sigma_0}^\frown a$. The induction hypothesis tells us that $\hat e_A(\sigma_0,a)$ is a term of type~$A[\sigma_0]$. In view of Definition~\ref{def:type-A(a)} it follows that~$A[\sigma]=A[\sigma_0](\hat e_A(\sigma_0,a))$ is a type. For $b\in A_\sigma$ we have $a\not\leq_A b$, so that the induction hypothesis yields $\hat e_A(\sigma_0,a)\not\leq_{A[\sigma_0]}\hat e_A(\sigma_0,b)$. By Proposition~\ref{prop:A_a-into-A(a)} we get
\begin{equation*}
\hat e_A(\sigma,b)=e_{A[\sigma_0]}(\hat e_A(\sigma_0,a),\hat e_A(\sigma_0,b))\in A[\sigma_0](\hat e_A(\sigma_0,a))=A[\sigma].
\end{equation*}
From $\hat e_A(\sigma,b)\leq_{A[\sigma]}\hat e_A(\sigma,b')$ we can infer $\hat e_A(\sigma_0,b)\leq_{A[\sigma_0]}\hat e_A(\sigma_0,b')$, also by Proposition~\ref{prop:A_a-into-A(a)}. Then $b\leq_A b'$ follows by induction hypothesis.
\end{proof}

In order to obtain a reification, it remains to assign a suitable ordinal to each type. Let us write $\alpha\oplus\beta$ and $\alpha\otimes\beta$ for the natural (``Hessenberg") sum and product of ordinals $\alpha,\beta\prec\varepsilon_0$ (see e.\,g.~\cite[\S~4]{simpson_hilbert-basis}). In contrast to the usual operations of ordinal arithmetic, the natural variants are commutative and strictly increasing in both arguments. Ordinals of the form $\omega^\gamma$ are additively indecomposable, in the sense that $\alpha,\beta\prec\omega^\gamma$ implies $\alpha\oplus\beta\prec\omega^{\gamma}$; conversely, any additively indecomposable ordinal $\delta\neq 0$ has the form $\delta=\omega^\gamma$. For $\alpha,\beta\prec\omega_2^\gamma:=\omega^{(\omega^\gamma)}$ we have $\alpha\otimes\beta\prec\omega_2^\gamma$.

\begin{definition}[$\mathbf{PA}$]
Let us say that a type is low if it does not involve the constant symbol~$\mathfrak B$. We recursively assign an ordinal $o(A)$ to each low type~$A$:
\begin{align*}
o(\mathfrak E)&=0, & o(A+B)&=o(A)\oplus o(B),\\
o(A\times B)&=o(A)\otimes o(B), & o(A^*)&=\omega_2^{o(A)}.
\end{align*}
\end{definition}

The following is crucial for the construction of a reification.

\begin{proposition}[$\mathbf{PA}$]\label{prop:subalgs-smaller}
If $A$ is a low type and $a\in A$ is a term, then $A(a)$ is a low type and we have $o(A(a))\prec o(A)$.
\end{proposition}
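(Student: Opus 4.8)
The plan is to argue in $\mathbf{PA}$ by course-of-values induction on $\#a$, following the recursive clauses of Definition~\ref{def:type-A(a)}. Since $A$ is low, the symbol $\mathfrak B$ does not occur, so clause~(i) is never invoked and $a$ must have one of the forms $\iota_0^Cb$, $\iota_1^Bc$, $\langle b,c\rangle$ (with the ambient type $B\times C$ indecomposable), or $\langle a_0,\dots,a_{n-1}\rangle_C$. In every clause the type $A(a)$ is assembled from types $B(b)$ with $\#b<\#a$ together with subexpressions of $A$ and the operations $+$, $\times$, $(-)^*$ and $\otimes$; none of these introduces $\mathfrak B$, and each such $B(b)$ is low by the inductive hypothesis, so $A(a)$ is low throughout. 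Hence the whole content is the strict inequality $o(A(a))\prec o(A)$.

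Before running the induction I would record three facts about the ordinal assignment and the natural operations, all provable in $\mathbf{PA}$ (cf.~\cite[\S~4]{simpson_hilbert-basis}). First, $o(A\otimes B)=o(A)\otimes o(B)$ for all low types $A,B$; this is an immediate induction on the definition of $A\otimes B$, using that $\otimes$ distributes over $\oplus$. Second, every indecomposable low type $B$ has additively indecomposable ordinal, i.e.\ $o(B)=0$ or $o(B)=\omega^\delta$ for some $\delta$: indeed $o(\mathfrak E)=0$, $o(B^*)=\omega_2^{o(B)}=\omega^{\omega^{o(B)}}$, and if $B,C$ are indecomposable then $o(B),o(C)$ are additively indecomposable by induction, hence so is $o(B\times C)=o(B)\otimes o(C)$ (using $\omega^e\otimes\omega^f=\omega^{e\oplus f}$). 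Third, an inhabited low type has nonzero ordinal (a trivial induction: $\mathfrak E$ has no terms, $o(A^*)\succeq\omega$ always, and if $A+B$ or $A\times B$ is inhabited then so are the relevant components, so $o(A)\oplus o(B)$ resp.\ $o(A)\otimes o(B)$ is nonzero by induction); this lets me invoke strict monotonicity of $\otimes$ whenever the relevant type is inhabited.

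Now the induction. The case $A=\mathfrak E$ is vacuous, and for $A=C^*$ with $a=\langle\rangle_C$ we have $o(A(a))=o(\mathfrak E)=0\prec\omega_2^{o(C)}=o(C^*)$. If $A=B+C$ and $a=\iota_0^Cb$, then $A(a)=B(b)+C$, so $o(A(a))=o(B(b))\oplus o(C)\prec o(B)\oplus o(C)=o(A)$ by the inductive hypothesis $o(B(b))\prec o(B)$ and strict monotonicity of $\oplus$; the subcase $a=\iota_1^Bc$ is symmetric. If $A=B\times C$ with $B,C$ indecomposable and $a=\langle b,c\rangle$, then $A(a)=B(b)\otimes C+B\otimes C(c)$, so the first fact gives $o(A(a))=\bigl(o(B(b))\otimes o(C)\bigr)\oplus\bigl(o(B)\otimes o(C(c))\bigr)$. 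Here $o(B),o(C)\neq 0$ since $B$ and $C$ are inhabited, so the inductive hypotheses $o(B(b))\prec o(B)$ and $o(C(c))\prec o(C)$ make both natural products on the right $\prec o(B)\otimes o(C)=o(B\times C)$; as the latter is additively indecomposable by the second fact, their natural sum is again $\prec o(B)\otimes o(C)=o(A)$. Finally, for $A=C^*$ with a nonempty $a=\langle a_0,\dots,a_n\rangle_C$, put $\tau=\langle a_1,\dots,a_n\rangle_C$; then $\#a_0,\#\tau<\#a$, and Definition~\ref{def:type-A(a)}(iv) with the first fact gives $o(A(a))=\omega_2^{o(C(a_0))}\oplus\bigl(\omega_2^{o(C(a_0))}\otimes o(C)\otimes o(C^*(\tau))\bigr)$. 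By the inductive hypothesis $o(C(a_0))\prec o(C)$ and $o(C^*(\tau))\prec o(C^*)=\omega_2^{o(C)}$; hence $\omega_2^{o(C(a_0))}\prec\omega_2^{o(C)}$ (strict monotonicity of $\gamma\mapsto\omega_2^\gamma$), $o(C)\prec\omega_2^{o(C)}$ (since $\omega_2^\delta\succ\delta$ for all $\delta\prec\varepsilon_0$), and $o(C^*(\tau))\prec\omega_2^{o(C)}$. Applying the preamble's closure property ``$\alpha,\beta\prec\omega_2^\gamma$ implies $\alpha\otimes\beta\prec\omega_2^\gamma$'' twice shows the second summand is $\prec\omega_2^{o(C)}$, the first summand clearly is, and $\omega_2^{o(C)}=\omega^{\omega^{o(C)}}$ is additively indecomposable, so $o(A(a))\prec\omega_2^{o(C)}=o(A)$.

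I expect the one genuinely delicate point to be the product case: for arbitrary types the naive bound $\bigl(o(B(b))\otimes o(C)\bigr)\oplus\bigl(o(B)\otimes o(C(c))\bigr)$ need not lie below $o(B)\otimes o(C)$, so the argument relies on $o(B)$ and $o(C)$ being additively indecomposable. This is exactly why the grammar of Definition~\ref{def:types} forms $B\times C$ only for indecomposable $B,C$, and why the second auxiliary fact above --- rather than the main induction --- carries the weight there. Everything is finitary and formalizes in $\mathbf{PA}$.
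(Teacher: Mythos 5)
Your proof is correct and follows essentially the same route as the paper's: the same two auxiliary facts ($o(A\otimes B)=o(A)\otimes o(B)$ and additive indecomposability of $o(B)$ for indecomposable $B$), followed by induction on $\#a$ with the identical case analysis for $+$, $\times$ and $(-)^*$. Your third preliminary observation---that inhabited low types have nonzero ordinal, which is what licenses strict monotonicity of the natural product in the $B\times C$ case---is a small point the paper's proof passes over silently, and it is a worthwhile addition rather than a deviation.
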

\begin{proof}
As preparation we note that $A\otimes B$ is low when the same holds for $A$ and~$B$. A straightforward induction shows $o(A\otimes B)=o(A)\otimes o(B)$; for example, the distributivity property from~\cite[Lemma~4.5(8)]{simpson_hilbert-basis} accounts for the inductive verification
\begin{multline*}
o((C+D)\otimes B)=o(C\otimes B+D\otimes B)=o(C\otimes B)\oplus o(D\otimes B)=\\
=(o(C)\otimes o(B))\oplus(o(D)\otimes o(B))=(o(C)\oplus o(D))\otimes o(B)=o(C+D)\otimes o(B).
\end{multline*}
By induction on~$A$ one can show that~$o(A)$ is additively indecomposable when~$A$ is an indecomposable type. The most interesting step concerns a type~$A=B\times C$, where $B$ and $C$ are indecomposable according to Definition~\ref{def:types}. Inductively we may write $o(B)=\omega^\beta$ and $o(C)=\omega^\gamma$ (unless we have $o(A)=0$). Then
\begin{equation*}
o(B\times C)=o(B)\otimes o(C)=\omega^\beta\otimes\omega^\gamma=\omega^{\beta\oplus\gamma}
\end{equation*}
is an additively indecomposable ordinal as well. The claim of the proposition can now be verified by induction over~$\#a$, for all types~$A$ simultaneously. First consider the case of a term $\iota_0^Ba\in A+B$. The induction hypothesis tells us that $A(a)$ is low with $o(A(a))\prec o(A)$. Hence $(A+B)(\iota_0^Ba)=A(a)+B$ is low and we have
\begin{equation*}
o((A+B)(\iota_0^B))=o(A(a)+B)=o(A(a))\oplus o(B)\prec o(A)\oplus o(B)=o(A+B).
\end{equation*}
The case of $\iota_1^Ab\in A+B$ is analogous. Now consider a term $\langle a,b\rangle\in A\times B$. In view of the above, the induction hypothesis implies that $A(a)\otimes B$ is low with ordinal
\begin{equation*}
o(A(a)\otimes B)=o(A(a))\otimes o(B)\prec o(A)\otimes o(B)=o(A\times B).
\end{equation*}
In the same way we get $o(A\otimes B(b))\prec o(A\times B)$. In view of Definition~\ref{def:types}, a type of the form $A\times B$ is always indecomposable. By the above this entails that $o(A\times B)$ is an additively indecomposable ordinal. Hence we obtain
\begin{equation*}
o((A\times B)(\langle a,b\rangle))=o(A(a)\otimes B+A\otimes B(b))=o(A(a)\otimes B)\oplus o(A\otimes B(b))\prec o(A\times B).
\end{equation*}
Finally, we consider the case of a type~$A^*$. Concerning the term $\langle\rangle_A\in A^*$, we note
\begin{equation*}
o(A^*(\langle\rangle_A))=o(\mathfrak E)=0\prec\omega_2^{o(A)}=o(A^*).
\end{equation*}
Now consider a term $a\star\sigma\in A^*$ (see the proof of Proposition~\ref{prop:A_a-into-A(a)} for the notation). In view of $\#a,\#\sigma<\#a\star\sigma$ the induction hypothesis yields $o(A^*(\sigma))\prec o(A^*)=\omega_2^{o(A)}$ and $o(A(a))\prec o(A)$. The latter implies $o(A(a)^*)=\omega_2^{o(A(a))}\prec\omega_2^{o(A)}$. Since we are concerned with ordinals below~$\varepsilon_0$, we also have $o(A)\prec\omega_2^{o(A)}$. Using the fact that $\omega_2^{o(A)}$ is additively and multiplicatively indecomposable, we can deduce
\begin{multline*}
o(A^*(a\star\sigma))=o(A(a)^*+A(a)^*\otimes A\otimes A^*(\sigma))=\\
=o(A(a)^*)\oplus o(A(a)^*)\otimes o(A)\otimes o(A^*(\sigma))\prec\omega_2^{o(A)}=o(A^*),
\end{multline*}
as required.
\end{proof}

Recall that the terms of type~$\mathfrak B$ coincide with the finite binary trees, i.\,e.~with the element of~$\mathcal B$. Below we will show that the type $\mathfrak B[\sigma]$ is low for any non-empty bad sequence $\sigma\in\operatorname{Bad}(\mathcal B)=\operatorname{Bad}(\mathfrak B)$. To state the following definition, we simply assume that the primitive recursive function~$o(\cdot)$ is extended to arbitrary~arguments. 

\begin{definition}[$\mathbf{PA}$]
For $\sigma\in\operatorname{Bad}(\mathcal B)$ we put $r(\sigma):=o(\mathfrak B[\sigma])$.
\end{definition}

Finally, we can deduce the promised result:

\begin{corollary}[$\mathbf{PA}$]\label{cor:reification}
The primitive recursive function $r:\operatorname{Bad}(\mathcal B)\rightarrow\varepsilon_0$ is a re\-ifi\-ca\-tion, i.\,e.~we have
\begin{equation*}
r(\langle t_0,\dots,t_n,t_{n+1}\rangle)\prec r(\langle t_0,\dots,t_n\rangle)
\end{equation*}
for any bad sequence $\langle t_0,\dots,t_n,t_{n+1}\rangle$ in~$\mathcal B$.
\end{corollary}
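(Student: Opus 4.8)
The plan is to reduce the statement to Proposition~\ref{prop:subalgs-smaller}, the only nontrivial work being to check that this proposition actually applies, i.e.\ that the type $\mathfrak B[\langle t_0,\dots,t_n\rangle]$ is \emph{low}. So fix a bad sequence $\langle t_0,\dots,t_n,t_{n+1}\rangle$ in $\mathcal B$ and abbreviate $\sigma=\langle t_0,\dots,t_n\rangle$ and $a=t_{n+1}$. Since a subsequence of a bad sequence is bad, $\sigma\in\operatorname{Bad}(\mathcal B)$, so $r(\sigma)$ is defined, and $a\in\mathfrak B_\sigma$ because $\sigma^\frown a$ is bad and non-empty. By Definition~\ref{def:bad-seq-types} we have $r(\sigma^\frown a)=o(\mathfrak B[\sigma^\frown a])=o\bigl(\mathfrak B[\sigma](\hat e_{\mathfrak B}(\sigma,a))\bigr)$. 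Assuming $\mathfrak B[\sigma]$ is low, Corollary~\ref{cor:bad-seq-types} tells us that $\hat e_{\mathfrak B}(\sigma,a)$ is a term of $\mathfrak B[\sigma]$, whence Proposition~\ref{prop:subalgs-smaller} yields $o\bigl(\mathfrak B[\sigma](\hat e_{\mathfrak B}(\sigma,a))\bigr)\prec o(\mathfrak B[\sigma])=r(\sigma)$, which is exactly the asserted inequality.

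It remains to establish that $\mathfrak B[\tau]$ is low whenever $\tau\in\operatorname{Bad}(\mathcal B)$ is non-empty. I would first prove, by a separate induction on $\#s$, that $\mathfrak B(s)$ is low for \emph{every} $s\in\mathfrak B$: indeed $\mathfrak B(\circ)=\mathfrak E$ is low, while $\mathfrak B(\circ(s_0,s_1))=(\mathfrak B(s_0)+\mathfrak B(s_1))^*$ is built via clauses~(ii) and~(iv) of Definition~\ref{def:types} from $\mathfrak B(s_0)$ and $\mathfrak B(s_1)$, which are low by induction hypothesis (as $\#s_0,\#s_1<\#\circ(s_0,s_1)$), and neither $+$ nor $(\cdot)^*$ reintroduces the constant $\mathfrak B$. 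Then I would argue by induction on the length of $\tau$. For $\tau=\langle t_0\rangle$ we have $\mathfrak B[\tau]=\mathfrak B[\langle\rangle](\hat e_{\mathfrak B}(\langle\rangle,t_0))=\mathfrak B(t_0)$, which is low by the first step. For $\tau=\tau_0^\frown b$ with $\tau_0$ non-empty, the induction hypothesis gives that $\mathfrak B[\tau_0]$ is low; since $\tau$ is bad, $b\in\mathfrak B_{\tau_0}$, so Corollary~\ref{cor:bad-seq-types} makes $\hat e_{\mathfrak B}(\tau_0,b)$ a term of $\mathfrak B[\tau_0]$, and Proposition~\ref{prop:subalgs-smaller} tells us that $\mathfrak B[\tau]=\mathfrak B[\tau_0](\hat e_{\mathfrak B}(\tau_0,b))$ is again low (and in fact has strictly smaller ordinal, which we do not need here).

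I expect the only genuine point of care to be the base case of the length induction: the type $\mathfrak B[\langle\rangle]=\mathfrak B$ is itself \emph{not} low, so the lowness of $\mathfrak B[\tau]$ really does require $\tau$ to be non-empty, and it is precisely the passage from $\mathfrak B$ to $\mathfrak B(t_0)$ — which trades the constant $\mathfrak B$ for $\mathfrak E$ — that gets us into the low fragment on which $o(\cdot)$ is defined. Beyond that, the argument is a direct chaining of Corollary~\ref{cor:bad-seq-types} (well-typedness of the iterated embeddings $\hat e_{\mathfrak B}$) with Proposition~\ref{prop:subalgs-smaller} (strict ordinal decrease and preservation of lowness under $A\mapsto A(a)$), and no ordinal computation is required.
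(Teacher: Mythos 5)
Your proposal is correct and follows essentially the same route as the paper's own proof: the same induction on the length of the bad sequence showing that $\mathfrak B[\sigma]$ is low (with the singleton base case $\mathfrak B[\langle t_0\rangle]=\mathfrak B(t_0)$ handled by an auxiliary induction over trees), combined with Corollary~\ref{cor:bad-seq-types} for well-typedness of $\hat e_{\mathfrak B}(\sigma,t)$ and both parts of Proposition~\ref{prop:subalgs-smaller} for preservation of lowness and the strict ordinal decrease. Your remark that the exclusion of the empty sequence from $\operatorname{Bad}(\mathcal B)$ is the crucial point matches the paper's own emphasis exactly.
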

\begin{proof}
We use induction on $\sigma\in\operatorname{Bad}(\mathfrak B)$ to show that $\mathfrak B[\sigma]$ is a low type. For this purpose it is crucial to recall that the empty sequence was included in $\operatorname{Bad}^+(\mathfrak B)$ but excluded from $\operatorname{Bad}(\mathfrak B)$. Hence the base case concerns a sequence of the form~$\sigma=\langle t\rangle$. In view of Definition~\ref{def:bad-seq-types} we have
\begin{equation*}
\mathfrak B[\langle t\rangle]=\mathfrak B[\langle\rangle](\hat e_{\mathfrak B}(\langle\rangle,t))=\mathfrak B(t).
\end{equation*}
Even though the type $\mathfrak B$ is not low, a straightforward induction on~$t\in\mathfrak B$ shows that $\mathfrak B(t)$ is a low type. Now consider a sequence~$\sigma^\frown t\in\operatorname{Bad}(\mathfrak B)$ with $\sigma\neq\langle\rangle$. The induction hypothesis ensures that~$\mathfrak B[\sigma]$ is a low type. According to Corollary~\ref{cor:bad-seq-types} we have $\hat e_{\mathfrak B}(\sigma,t)\in\mathfrak B[\sigma]$. By (the easy part of) Proposition~\ref{prop:subalgs-smaller} we conclude that
\begin{equation*}
\mathfrak B[\sigma^\frown t]=\mathfrak B[\sigma](\hat e_{\mathfrak B}(\sigma,t))
\end{equation*}
is a low type as well. The more substantial part of Proposition~\ref{prop:subalgs-smaller} yields
\begin{equation*}
r(\sigma^\frown t)=o(\mathfrak B[\sigma^\frown t])\prec o(\mathfrak B[\sigma])=r(\sigma).
\end{equation*}
For $\sigma=\langle t_0,\dots,t_n\rangle$ and $t=t_{n+1}$ this is the claim of the corollary.
\end{proof}

\section{From reflection to the finite basis property}\label{ref:refl-to-fin-base}

Working over $\mathbf{PA}$, we show that $\operatorname{Rfn}_{\mathbf{PA}}(\Sigma^0_2)$ entails $\mathcal{TI}(\varepsilon_0,\Pi^-_1)$, which does in turn entail $\mathcal K\Sigma^-_1$. This completes our proof that all three principles are equivalent. Using Goryachev's theorem, we can deduce a characterization of the $\Pi^0_1$-sentences that are provable in $\mathbf{PA}+\mathcal K\Sigma^-_1$.

For the case of uniform reflection and induction with parameters, the following has been shown by Kreisel and L\'evy~\cite{kreisel68}.

\begin{proposition}\label{prop:rfn-to-ti}
Each instance of $\mathcal{TI}(\varepsilon_0,\Pi^-_1)$ can be proved in $\mathbf{PA}+\operatorname{Rfn}_{\mathbf{PA}}(\Sigma^0_2)$.
\end{proposition}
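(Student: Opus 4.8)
The plan is to adapt the argument of Kreisel and L\'evy~\cite{kreisel68} to the parameter-free setting; in fact a single instance of local $\Sigma^0_2$-reflection will suffice. Fix a $\Pi^0_1$-formula $\psi$ with exactly one free variable and abbreviate
\begin{equation*}
\operatorname{Prog}(\psi):\equiv\forall_{\gamma\prec\varepsilon_0}(\forall_{\beta\prec\gamma}\psi(\beta)\rightarrow\psi(\gamma)),
\end{equation*}
so that $\mathcal{TI}(\varepsilon_0,\psi)$ is the sentence $\operatorname{Prog}(\psi)\rightarrow\forall_{\alpha\prec\varepsilon_0}\psi(\alpha)$. The first point is a complexity count, and it is the only place where the restriction to $\Pi^0_1$-formulas enters: since $\psi$ is $\Pi^0_1$, each conjunct $\forall_{\beta\prec\gamma}\psi(\beta)\rightarrow\psi(\gamma)$ is an implication between $\Pi^0_1$-formulas, hence provably equivalent to a disjunction of a $\Sigma^0_1$- and a $\Pi^0_1$-formula, and therefore to a $\Pi^0_2$-formula; prefixing $\forall_{\gamma\prec\varepsilon_0}$ keeps us in $\Pi^0_2$. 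So $\operatorname{Prog}(\psi)$ is (provably equivalent to) a closed $\Pi^0_2$-sentence, $\neg\operatorname{Prog}(\psi)$ to a closed $\Sigma^0_2$-sentence, and $\operatorname{Rfn}_{\mathbf{PA}}(\Sigma^0_2)$ therefore contains the instance $\operatorname{Pr}_{\mathbf{PA}}(\neg\operatorname{Prog}(\psi))\rightarrow\neg\operatorname{Prog}(\psi)$.

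The second ingredient is the formalized version of Gentzen's theorem~\cite{gentzen36} that $\mathbf{PA}$ proves transfinite induction up to every proper initial segment of $\varepsilon_0$. More precisely, there is a primitive recursive function mapping each notation $\alpha\prec\varepsilon_0$ to a $\mathbf{PA}$-proof of $\operatorname{Prog}(\psi)\rightarrow\psi(\overline\alpha)$: one obtains such a proof, uniformly in $n$, by first establishing $\operatorname{Prog}(\psi)\rightarrow\forall_{\beta\prec\omega_n}\psi(\beta)$ for the tower $\omega_0=\omega$, $\omega_{n+1}=\omega^{\omega_n}$ (with $\sup_n\omega_n=\varepsilon_0$) via ordinary induction applied to the $n$-fold ``jump'' of $\psi$, and then, given $\alpha$, choosing an $n$ with $\alpha\prec\omega_n$. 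Formalizing this construction inside $\mathbf{PA}$ yields
\begin{equation*}
\mathbf{PA}\vdash\forall_{\alpha\prec\varepsilon_0}\operatorname{Pr}_{\mathbf{PA}}(\operatorname{Prog}(\psi)\rightarrow\psi(\overline\alpha)).
\end{equation*}

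The two ingredients are combined as follows. Arguing informally in $\mathbf{PA}$, assume $\neg\mathcal{TI}(\varepsilon_0,\psi)$; then $\operatorname{Prog}(\psi)$ holds and $\neg\psi(\alpha_0)$ for some $\alpha_0\prec\varepsilon_0$. By the last display we have $\operatorname{Pr}_{\mathbf{PA}}(\operatorname{Prog}(\psi)\rightarrow\psi(\overline{\alpha_0}))$, while $\neg\psi(\alpha_0)$ is a true $\Sigma^0_1$-sentence, so provable $\Sigma^0_1$-completeness gives $\operatorname{Pr}_{\mathbf{PA}}(\neg\psi(\overline{\alpha_0}))$. Since provability is provably closed under logical consequence and modus ponens, these two facts yield $\operatorname{Pr}_{\mathbf{PA}}(\neg\operatorname{Prog}(\psi))$. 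As $\alpha_0$ has disappeared from the conclusion, we have shown
\begin{equation*}
\mathbf{PA}\vdash\neg\mathcal{TI}(\varepsilon_0,\psi)\rightarrow\operatorname{Pr}_{\mathbf{PA}}(\neg\operatorname{Prog}(\psi)).
\end{equation*}
Now work in $\mathbf{PA}+\operatorname{Rfn}_{\mathbf{PA}}(\Sigma^0_2)$: the reflection instance for $\neg\operatorname{Prog}(\psi)$ converts the conclusion of the last display into $\neg\operatorname{Prog}(\psi)$, so $\neg\mathcal{TI}(\varepsilon_0,\psi)$ implies $\neg\operatorname{Prog}(\psi)$; but it also implies $\operatorname{Prog}(\psi)$ by pure logic. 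Hence $\neg\mathcal{TI}(\varepsilon_0,\psi)$ is refutable, and $\mathcal{TI}(\varepsilon_0,\psi)$ follows.

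The main obstacle is the second displayed implication of the previous paragraph together with the provability-logic bookkeeping around it: one must verify carefully, inside $\mathbf{PA}$, that $\operatorname{Pr}_{\mathbf{PA}}(\operatorname{Prog}(\psi)\rightarrow\psi(\overline{\alpha_0}))$ and $\operatorname{Pr}_{\mathbf{PA}}(\neg\psi(\overline{\alpha_0}))$ combine to $\operatorname{Pr}_{\mathbf{PA}}(\neg\operatorname{Prog}(\psi))$, and one must ensure that the sentence to which reflection is applied, namely $\neg\operatorname{Prog}(\psi)$, carries no free variables --- this is precisely the feature that lets us invoke local rather than uniform reflection, in contrast to the original Kreisel--L\'evy proof, where $n$ enters as a parameter. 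A subsidiary point is to supply the uniform primitive recursive proof of $\operatorname{Prog}(\psi)\rightarrow\psi(\overline\alpha)$ with the correct complexity accounting, since the $n$-fold jump of $\psi$ lands in $\Pi^0_{n+1}$ and the corresponding instance of ordinary induction, available in $\mathbf{PA}$ but not in any fixed fragment, has to be invoked (cf.~\cite{sommer95}).
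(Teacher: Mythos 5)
Your proposal is correct and follows essentially the same route as the paper: formalized $\Sigma^0_1$-completeness applied to $\neg\psi(\alpha_0)$, the $\mathbf{PA}$-formalized Gentzen result $\forall_{\alpha\prec\varepsilon_0}\operatorname{Pr}_{\mathbf{PA}}(\operatorname{Prog}(\psi)\rightarrow\psi(\dot\alpha))$, combination under provability to obtain $\operatorname{Pr}_{\mathbf{PA}}(\neg\operatorname{Prog}(\psi))$, and then the single local reflection instance for the closed $\Sigma^0_2$-sentence $\neg\operatorname{Prog}(\psi)$, exactly as in the paper's contrapositive argument. The only differences are cosmetic (reductio instead of contraposition, numeral notation $\overline{\alpha_0}$ where Feferman's dot notation $\dot\alpha$ is meant), and your identification of the parameter-free character of $\neg\operatorname{Prog}(\psi)$ as the point that makes local rather than uniform reflection suffice matches the paper's emphasis.
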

\begin{proof}
Consider a $\Pi^0_1$-formula $\psi(x)$ with a single free variable. Arguing in the theory $\mathbf{PA}+\operatorname{Rfn}_{\mathbf{PA}}(\Sigma^0_2)$, we establish $\mathcal{TI}(\varepsilon_0,\psi)$ by contraposition: Assume that the conclusion of transfinite induction fails, so that we have $\exists_{\alpha\prec\varepsilon_0}\neg\psi(\alpha)$. The latter is a $\Sigma^0_1$-formula, so that its truth can be established by an explicit verification. More formally, we invoke formalized $\Sigma^0_1$-completeness (cf.~\cite[Theorem~I.1.8]{hajek91}) to obtain
\begin{equation*}
\exists_{\alpha\prec\varepsilon_0}\operatorname{Pr}_{\mathbf{PA}}(\neg\psi(\dot\alpha)).
\end{equation*}
This uses Feferman's dot notation: By $\psi(\dot\alpha)$ one denotes the closed object formula that result from $\psi(x)$ when we substitute $x$ by the $\alpha$-th numeral, where the code~$\alpha$ is considered as a natural number (cf.~the notation in~\cite[Corollary~I.1.76]{hajek91}). Gentzen~\cite{gentzen43} has shown that $\mathbf{PA}$ proves induction up to each fixed ordinal below~$\varepsilon_0$. This result can itself be formalized in Peano arithmetic (and in much weaker theories, cf.~\cite[Section~3]{freund-pakhomov}), so that we get
\begin{equation*}
\forall_{\alpha\prec\varepsilon_0}\operatorname{Pr}_{\mathbf{PA}}(\forall_{\gamma\prec\varepsilon_0}(\forall_{\beta\prec\gamma}\psi(\beta)\rightarrow\psi(\gamma))\to\psi(\dot\alpha)).
\end{equation*}
Together with the above this yields
\begin{equation*}
\operatorname{Pr}_{\mathbf{PA}}(\neg\forall_{\gamma\prec\varepsilon_0}(\forall_{\beta\prec\gamma}\psi(\beta)\rightarrow\psi(\gamma))).
\end{equation*}
By an instance of $\operatorname{Rfn}_{\mathbf{PA}}(\Sigma^0_2)$ we get $\neg\forall_{\gamma\prec\varepsilon_0}(\forall_{\beta\prec\gamma}\psi(\beta)\rightarrow\psi(\gamma))$, which is (provably equivalent to) a closed $\Sigma^0_2$-formula. Hence the premise of $\mathcal{TI}(\varepsilon_0,\psi)$ fails, so that our proof by contraposition is complete.
\end{proof}

The following is a consequence of the result that $(\mathcal B,\leq_{\mathcal B})$ is a well partial order with maximal order type~$\varepsilon_0$, which is due to de Jongh (unpublished; cf.~the introduction to~\cite{schmidt75}) and Diana~Schmidt (see~\cite[Theorem~II.2]{schmidt-habil} in combination with the example after~\cite[Definition~I.15]{schmidt-habil}). A detailed proof in our setting has been given in the previous section.

\begin{proposition}\label{prop:ti-to-basis}
Each instance of $\mathcal K\Sigma^-_1$ can be proved in~$\mathbf{PA}+\mathbf{TI}(\varepsilon_0,\Pi^-_1)$.
\end{proposition}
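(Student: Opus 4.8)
The plan is to argue by contraposition, turning the reification $r\colon\operatorname{Bad}(\mathcal B)\to\varepsilon_0$ of Corollary~\ref{cor:reification} into a $\Pi^0_1$ induction formula. Fix a $\Sigma^0_1$-formula $\varphi$ with exactly one free variable. Working in $\mathbf{PA}+\mathcal{TI}(\varepsilon_0,\Pi^-_1)$, I would assume that $\mathcal K\varphi$ fails and derive a contradiction. Call a finite sequence $\sigma$ a \emph{bad $\varphi$-sequence} if it is a finite bad sequence in $\mathcal B$ all of whose entries satisfy $\varphi$.

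First I would formalize, within $\mathbf{PA}$, the single-step version of the argument in Remark~\ref{rmk:KSigma-true}: from $\neg\mathcal K\varphi$ it follows that every bad $\varphi$-sequence $\sigma$ can be prolonged to a bad $\varphi$-sequence $\sigma^\frown t$. Indeed, letting $a$ be the (finite) set of entries of $\sigma$, the first conjunct $\forall_{s\in a}\varphi(s)$ holds, so the failure of $\mathcal K\varphi$ forces the second conjunct to fail for this $a$, which yields $t\in\mathcal B$ with $\varphi(t)$ and $s\not\leq_{\mathcal B}t$ for all $s\in a$; thus $\sigma^\frown t$ is again a bad $\varphi$-sequence. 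Applying this to the empty sequence already produces some $t_0\in\mathcal B$ with $\varphi(t_0)$, hence a non-empty bad $\varphi$-sequence $\langle t_0\rangle$. No form of dependent choice is needed, only this elementary prolongation step.

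Next I would take as induction formula
\[
\psi(\alpha):\equiv\forall_\sigma\neg\bigl(\text{``$\sigma$ is a non-empty bad $\varphi$-sequence''}\land r(\sigma)=\alpha\bigr),
\]
i.e.\ ``$\alpha$ is not the reification value of any non-empty bad $\varphi$-sequence''. Since $r$ is primitive recursive and $\leq_{\mathcal B}$ is $\Delta^0_1$, the predicates ``$\sigma\in\operatorname{Bad}(\mathcal B)$'' and ``$r(\sigma)=\alpha$'' are $\Delta^0_1$ in $\mathbf{PA}$, and ``every entry of $\sigma$ satisfies $\varphi$'' is a bounded quantification over the $\Sigma^0_1$-formula $\varphi$, hence still $\Sigma^0_1$; so the matrix of $\psi$ is $\Sigma^0_1$ and $\psi(\alpha)$ is $\Pi^0_1$. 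Because $\varphi$ carries no parameters, $\alpha$ is the only free variable of $\psi$, so $\mathcal{TI}(\varepsilon_0,\psi)$ is a legitimate instance of $\mathcal{TI}(\varepsilon_0,\Pi^-_1)$. Running that induction: in the step, if $\forall_{\beta\prec\gamma}\psi(\beta)$ held but $\psi(\gamma)$ failed via a non-empty bad $\varphi$-sequence $\sigma$ with $r(\sigma)=\gamma$, the prolongation step would give a bad $\varphi$-sequence $\sigma^\frown t$, and since $\sigma$ is non-empty and $\sigma^\frown t$ is bad \emph{in $\mathcal B$}, Corollary~\ref{cor:reification} would give $r(\sigma^\frown t)\prec\gamma$, contradicting $\psi(r(\sigma^\frown t))$. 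Hence $\forall_{\alpha\prec\varepsilon_0}\psi(\alpha)$. But $\langle t_0\rangle$ witnesses $\neg\psi(r(\langle t_0\rangle))$ while $r(\langle t_0\rangle)\prec\varepsilon_0$ --- a contradiction. Therefore $\mathcal K\varphi$, as required.

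I expect the main obstacle to be purely bookkeeping: confirming that $\psi$ lies in $\Pi^0_1$ and has no free variable besides $\alpha$ (this is exactly where the parameter-free hypothesis on $\varphi$ is used), and checking that the hypothesis of Corollary~\ref{cor:reification} --- that the \emph{whole} prolonged sequence is bad in $\mathcal B$, not merely a bad $\varphi$-sequence in a weaker sense --- is actually met, which is guaranteed by how $t$ is chosen in the prolongation step. The remaining verifications are routine formalizations in $\mathbf{PA}$.
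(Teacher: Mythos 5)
Your argument is correct, but it takes a genuinely different route from the paper. The paper's proof is driven by the observation that ``$\sigma$ is a bad $\varphi$-sequence'' need not be decidable when $\varphi$ is properly $\Sigma^0_1$: it therefore introduces \emph{certified} $\varphi$-sequences (each tree $t_i$ paired with a witness $c_i$ for $\theta(t_i,c_i)$), defines by minimization a parameter-free $\Sigma^0_1$-definable function $f$ enumerating such a sequence, shows under $\neg\mathcal K\varphi$ that $f$ is total, and then applies $\mathcal{TI}(\varepsilon_0,\Pi^-_1)$ to the formula $\forall_n\forall_{\delta\prec\varepsilon_0}(r\circ g(n)=\delta\rightarrow\alpha\preceq\delta)$ to refute the resulting strictly decreasing sequence of ordinals. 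You bypass the construction of any explicit infinite sequence: you put the transfinite induction directly on $\psi(\alpha)\equiv$ ``no non-empty bad $\varphi$-sequence has $r$-value $\alpha$'' and use the one-step prolongation from $\neg\mathcal K\varphi$ inside the induction step, where a single existential instantiation replaces dependent choice. This is shorter and makes the quantifier bookkeeping lighter; the only point you pass over quickly is that the bounded universal quantifier ``every entry of $\sigma$ satisfies $\varphi$'' is $\Sigma^0_1$ only up to $\Sigma^0_1$-collection, which $\mathbf{PA}$ does prove, so your $\psi$ is provably equivalent to a parameter-free $\Pi^0_1$-formula and your instance of $\mathcal{TI}(\varepsilon_0,\Pi^-_1)$ is legitimate. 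If one wanted to avoid collection (or work over a weaker base theory), one could graft the paper's device onto your argument by quantifying over certified sequences instead, which makes the matrix literally $\Delta^0_1$; conversely, the paper's explicit $f$ is what realizes its stated concern about non-decidability of the $\varphi$-sequence property, a concern your formulation simply renders moot. Both proofs use Corollary~\ref{cor:reification} in exactly the same way, and your checks on where non-emptiness and badness in $\mathcal B$ are needed are accurate.
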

\begin{proof}
We fix an instance $\mathcal K\varphi$ (where $\varphi$ is a $\Sigma^0_1$-formula with a single free variable) and work in~$\mathbf{PA}+\mathbf{TI}(\varepsilon_0,\Pi^-_1)$. It is instructive to recall the argument from \mbox{Remark \ref{rmk:KSigma-true}}, which relies on a notion of $\varphi$-sequence. If $\{n\in\mathbb N\,|\,\varphi(n)\}$ is computably enumerable but not decidable, then it is not decidable whether a given finite sequence is a $\varphi$-sequence. For this reason we now introduce a finer notion: Write $\varphi(x)\equiv\exists_y\theta(x,y)$ with a \mbox{$\Delta^0_0$-formula}~$\theta$. As in the previous section we write $\operatorname{Bad}(\mathcal B)$ for the set of non-empty finite bad sequences in~$\mathcal B$. By a certified $\varphi$-sequence we mean a finite sequence
\begin{equation*}
(t_0,c_0),\dots,(t_n,c_n)\subseteq\mathcal B\times\mathbb N
\end{equation*}
such that we have $\langle t_0,\dots,t_n\rangle\in\operatorname{Bad}(\mathcal B)$ and $\theta(t_i,c_i)$ for all~$i\leq n$. Note that the latter implies~$\varphi(t_i)$. Since $\theta$ contains no further free variables, the notion of certified $\varphi$-sequence is defined by a $\Delta^0_1$-formula without parameters. By picking the value $f(n)$ with minimal code, one can thus define a (possibly partial) function $f:\mathbb N\to\mathcal B\times\mathbb N$ with the following property:
\begin{itemize}
\item If the sequence $\langle f(0),\dots,f(n-1)\rangle$ is defined and can be extended into a certified $\varphi$-sequence of length~$n+1$, then $\langle f(0),\dots,f(n)\rangle$ is such a sequence.
\end{itemize}
Note that the relation $f(x)=y$ is $\Sigma^0_1$-definable without parameters. Aiming at a contradiction, we now assume that the instance $\mathcal K\varphi$ is false. Then all values~$f(n)$ are defined: Inductively, we may assume that $f(m)=(t_m,c_m)$ is defined for all $m<n$; in the case of $n>0$, the construction of~$f$ ensures that $\langle(t_0,c_0),\dots,(t_{n-1},c_{n-1})\rangle$ is a certified $\varphi$-sequence. To deduce that $f(n)$ is defined as well, we consider the set $a:=\{t_0,\dots,t_{n-1}\}$. As $\mathcal K\varphi$ is false, we must have
\begin{equation*}
\neg\forall_{s\in a}\varphi(s)\lor\exists_{t\in\mathcal B}(\varphi(t)\land\forall_{s\in a}s\not\leq_{\mathcal B}t).
\end{equation*}
For $s=t_m\in a$, the construction of $f$ ensures $\theta(t_m,c_m)$ and thus $\varphi(s)$. Hence the second disjunct yields an element $t_n\in\mathcal B$ with $\varphi(t_n)$ and $t_m\not\leq_{\mathcal B}t_n$ for all~$m<n$. The latter implies $\langle t_0,\dots,t_n\rangle\in\operatorname{Bad}(\mathcal B)$. Due to $\varphi(t_n)$ we can pick a number $c_n$ with~$\theta(t_n,c_n)$. Then $\langle f(0),\dots,f(n-1),(t_n,c_n)\rangle$ is a certified $\varphi$-sequence, and $f(n)$ is defined as the smallest pair $\langle t_n,c_n\rangle$ for which this holds. We can now define a total computable function $g:\mathbb N\rightarrow\operatorname{Bad}(\mathcal B)$ by setting
\begin{equation*}
g(n):=\langle t_0,\dots,t_n\rangle\qquad\text{with $f(m)=(t_m,c_m)$}.
\end{equation*}
According to Corollary~\ref{cor:reification}, there is a primitive recursive reification
\begin{equation*}
r:\operatorname{Bad}(\mathcal B)\rightarrow\varepsilon_0.
\end{equation*}
It follows that the total computable function $r\circ g:\mathbb N\rightarrow\varepsilon_0$ is strictly decreasing. This is impossible in the presence of $\mathbf{TI}(\varepsilon_0,\Pi^-_1)$. To be more precise, we note that $r\circ g(n)=\alpha$ is $\Sigma^0_1$-definable without parameters. Using $\mathbf{TI}(\varepsilon_0,\Pi^-_1)$ one can prove
\begin{equation*}
\forall_{\alpha\prec\varepsilon_0}\forall_n\forall_{\delta\prec\varepsilon_0}(r\circ g(n)=\delta\rightarrow\alpha\preceq\delta).
\end{equation*}
To establish the induction step, it suffices to derive a contradiction from the assumption that we have $r\circ g(n)\prec\alpha$ for some~$n\in\mathbb N$. Since~$r$ is a reification, the latter would lead to $r\circ g(n+1)\prec r\circ g(n)=:\gamma$, which contradicts the induction hypothesis for~$\gamma\prec\alpha$. If we apply the result of the induction to $\alpha=r\circ g(0)+1\prec\varepsilon_0$, $n=0$ and $\delta=r\circ g(0)$, then we get $r\circ g(0)+1\preceq r\circ g(0)$, which is impossible.
\end{proof}

Together with Propositions~\ref{prop:fin-base-to-TI},~\ref{prop:TI-implies-Rfn} and~\ref{prop:rfn-to-ti} we obtain the following:

\begin{theorem}\label{thm:basis-ind-rfl}
We have
\begin{equation*}
\mathbf{PA}+\mathcal K\Sigma^-_1\,\equiv\,\mathbf{PA}+\mathcal{TI}(\varepsilon_0,\Pi^-_1)\,\equiv\,\mathbf{PA}+\operatorname{Rfn}_{\mathbf{PA}}(\Sigma^0_2),
\end{equation*}
i.\,e.~all three theories prove the same theorems.
\end{theorem}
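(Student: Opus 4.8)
The plan is to close a short cycle of inclusions among the four propositions established earlier in the paper. Recall that two extensions of $\mathbf{PA}$ prove the same theorems precisely when the deductive closure of each is contained in that of the other, and that one theory's deductive closure is contained in another's as soon as the latter proves every axiom of the former: any derivation invokes only finitely many axioms, so each may be replaced by its proof in the larger theory. It therefore suffices, for each adjacent pair among the three theories in the statement, to verify that one of them proves every instance of the scheme axiomatizing the other.

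First I would record the equivalence $\mathbf{PA}+\mathcal K\Sigma^-_1\equiv\mathbf{PA}+\mathcal{TI}(\varepsilon_0,\Pi^-_1)$. Proposition~\ref{prop:fin-base-to-TI} shows that $\mathbf{PA}+\mathcal K\Sigma^-_1$ proves each instance of $\mathcal{TI}(\varepsilon_0,\Pi^-_1)$, which yields one inclusion; Proposition~\ref{prop:ti-to-basis} shows that $\mathbf{PA}+\mathcal{TI}(\varepsilon_0,\Pi^-_1)$ proves each instance $\mathcal K\varphi$ of $\mathcal K\Sigma^-_1$, which yields the other. Next I would record $\mathbf{PA}+\mathcal{TI}(\varepsilon_0,\Pi^-_1)\equiv\mathbf{PA}+\operatorname{Rfn}_{\mathbf{PA}}(\Sigma^0_2)$: Proposition~\ref{prop:TI-implies-Rfn} gives the inclusion from left to right, since each instance of $\operatorname{Rfn}_{\mathbf{PA}}(\Sigma^0_2)$ is provable in $\mathbf{PA}+\mathcal{TI}(\varepsilon_0,\Pi^-_1)$, and Proposition~\ref{prop:rfn-to-ti} gives the converse. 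Transitivity of the relation ``proves the same theorems'' then produces the full chain displayed in the theorem.

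The genuine work has already been carried out in the four cited propositions — the primitive recursive reification of Section~\ref{sect:reification} underlying Proposition~\ref{prop:ti-to-basis}, the formalized ordinal analysis of Buchholz underlying Proposition~\ref{prop:TI-implies-Rfn}, and formalized $\Sigma^0_1$-completeness together with Gentzen's induction up to each fixed ordinal below $\varepsilon_0$ underlying Proposition~\ref{prop:rfn-to-ti} — so at this stage I expect no substantial obstacle. The only point that deserves an explicit remark is the one noted in the first paragraph: the passage from ``provability of every instance of a scheme'' to ``provability of every theorem of the augmented theory'' relies on the finiteness of formal derivations. It is also worth emphasising that no further metamathematical input enters here; in particular, G\"odel's theorem is not used in the assembly of the equivalence itself, but only in the separate properness statements such as Corollary~\ref{cor:Sigma2-essential}.
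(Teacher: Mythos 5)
Your assembly is correct and is essentially identical to the paper's own proof: the theorem is stated there as an immediate consequence of Propositions~\ref{prop:fin-base-to-TI}, \ref{prop:TI-implies-Rfn}, \ref{prop:rfn-to-ti} and~\ref{prop:ti-to-basis}, exactly the four mutual-inclusion facts you cite. Your remarks on the finiteness of derivations and on G\"odel's theorem being needed only for properness are accurate but not part of the paper's (one-line) argument.
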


Let $\operatorname{Con}(\mathbf{PA}+\varphi)$ be a reasonable formalization of the statement that $\mathbf{PA}+\varphi$ is consistent. We consider the recursively generated $\Pi^0_1$-sentences
\begin{align*}
\operatorname{Con}_0(\mathbf{PA})&\,:\equiv\, 0=0,\\
\operatorname{Con}_{n+1}(\mathbf{PA})&\,:\equiv\,\operatorname{Con}(\mathbf{PA}+\operatorname{Con}_n(\mathbf{PA})).
\end{align*}
Note that $\operatorname{Con}_1(\mathbf{PA})$ is equivalent to the usual consistency statement. As mentioned in the introduction, we obtain the following:

\begin{corollary}\label{cor:goryachev-basis}
We have
\begin{equation*}
\mathbf{PA}+\mathcal K\Sigma^-_1\,\equiv_{\Pi^0_1}\,\mathbf{PA}+\{\operatorname{Con}_n(\mathbf{PA})\,|\,n\in\mathbb N\},
\end{equation*}
i.\,e.~the two theories prove the same $\Pi^0_1$-sentences.
\end{corollary}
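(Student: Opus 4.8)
The strategy is to combine Theorem~\ref{thm:basis-ind-rfl} with Goryachev's theorem on iterated consistency. By Theorem~\ref{thm:basis-ind-rfl} the theory $\mathbf{PA}+\mathcal K\Sigma^-_1$ proves the same theorems as $\mathbf{PA}+\operatorname{Rfn}_{\mathbf{PA}}(\Sigma^0_2)$, so it suffices to show that $\mathbf{PA}+\operatorname{Rfn}_{\mathbf{PA}}(\Sigma^0_2)$ has the same $\Pi^0_1$-theorems as $\mathbf{PA}+\{\operatorname{Con}_n(\mathbf{PA})\mid n\in\mathbb N\}$.

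One inclusion is elementary, and I would establish it directly. By meta-induction on $n$ one shows that $\mathbf{PA}+\operatorname{Rfn}_{\mathbf{PA}}(\Sigma^0_2)$ proves each $\operatorname{Con}_n(\mathbf{PA})$: the base case is trivial, and in the step one uses that $\operatorname{Con}_{n+1}(\mathbf{PA})$ is $\mathbf{PA}$-provably equivalent to $\neg\operatorname{Pr}_{\mathbf{PA}}(\neg\operatorname{Con}_n(\mathbf{PA}))$ (formalised deduction theorem), that $\neg\operatorname{Con}_n(\mathbf{PA})$ is a $\Sigma^0_1$-, hence $\Sigma^0_2$-sentence, and that the reflection instance for $\neg\operatorname{Con}_n(\mathbf{PA})$, read contrapositively, turns the induction hypothesis $\operatorname{Con}_n(\mathbf{PA})$ into $\operatorname{Con}_{n+1}(\mathbf{PA})$. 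Thus $\mathbf{PA}+\{\operatorname{Con}_n(\mathbf{PA})\}$ is contained in $\mathbf{PA}+\operatorname{Rfn}_{\mathbf{PA}}(\Sigma^0_2)$; in particular every $\Pi^0_1$-theorem of the former is one of the latter.

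The converse inclusion---every $\Pi^0_1$-theorem of $\mathbf{PA}+\operatorname{Rfn}_{\mathbf{PA}}(\Sigma^0_2)$ follows from $\mathbf{PA}+\{\operatorname{Con}_n(\mathbf{PA})\}$---is the substantive part, and here I would appeal to Goryachev's theorem, according to which the finitely iterated consistency statements axiomatise, over $\mathbf{PA}$, the $\Pi^0_1$-consequences of $\mathbf{PA}$ extended by $\Sigma^0_1$-reflection. The required statement about $\operatorname{Rfn}_{\mathbf{PA}}(\Sigma^0_2)$ then follows once one checks that, at the level of $\Pi^0_1$-consequences over $\mathbf{PA}$, this scheme is no stronger than the reflection principle to which Goryachev's theorem applies. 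By Theorem~\ref{thm:basis-ind-rfl} one may equally phrase the argument in terms of $\mathcal{TI}(\varepsilon_0,\Pi^-_1)$, which is the form closest to Beklemishev's result \cite[Theorem~3]{beklemishev-provability-algebras} that the closure of $\mathbf{PA}$ under the $\Pi^0_1$-induction rule along $\varepsilon_0$ proves exactly the theorems of $\mathbf{PA}+\{\operatorname{Con}_n(\mathbf{PA})\}$. Combining the two inclusions yields the corollary.

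I expect the only real obstacle to be bibliographic bookkeeping: identifying the precise form of Goryachev's theorem being cited (local versus uniform reflection, the exact complexity class, axiom scheme versus rule) and verifying that the passage between that form and the principle supplied by Theorem~\ref{thm:basis-ind-rfl} is harmless, since only $\Pi^0_1$-consequences are at issue. No genuinely new proof-theoretic work is required beyond Theorem~\ref{thm:basis-ind-rfl} and the cited results themselves.
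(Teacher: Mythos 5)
Your proposal is correct and takes essentially the same route as the paper: there too, Theorem~\ref{thm:basis-ind-rfl} is combined with Goryachev's theorem for the substantive inclusion, while the other inclusion is the same meta-induction proving each $\operatorname{Con}_n(\mathbf{PA})$ in $\mathbf{PA}+\operatorname{Rfn}_{\mathbf{PA}}(\Sigma^0_2)$ via the contrapositive of the reflection instance for the $\Sigma^0_1$-sentence $\neg\operatorname{Con}_n(\mathbf{PA})$. The ``bookkeeping'' you flag is settled in the paper by invoking Goryachev's theorem for the \emph{full} local reflection scheme $\operatorname{Rfn}_{\mathbf{PA}}$ (all sentences, as in Lindstr\"om), so that the claim for the subscheme $\operatorname{Rfn}_{\mathbf{PA}}(\Sigma^0_2)$ follows a fortiori and no separate comparison with $\Sigma^0_1$-reflection is needed.
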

\begin{proof}
Let us write $\operatorname{Rfn}_{\mathbf{PA}}$ for the full local reflection principle, i.\,e.~the collection of all formulas $\operatorname{Pr}_{\mathbf{PA}}(\varphi)\rightarrow\varphi$, where $\varphi$ can be any sentence in the language of first order arithmetic. According to Goryachev's theorem (see e.\,g.~\cite[Theorem~IV.5]{lindstroem97}), any $\Pi^0_1$-theorem of $\mathbf{PA}+\operatorname{Rfn}_{\mathbf{PA}}$ can be proved in $\mathbf{PA}+\{\operatorname{Con}_n(\mathbf{PA})\,|\,n\in\mathbb N\}$. A~fortiori, this applies to all $\Pi^0_1$-theorems of $\mathbf{PA}+\operatorname{Rfn}_{\mathbf{PA}}(\Sigma^0_2)\equiv\mathbf{PA}+\mathcal K\Sigma^-_1$. In the other direction we have a full inclusion: The theory $\mathbf{PA}+\operatorname{Rfn}_{\mathbf{PA}}(\Sigma^0_2)$ proves all theorems of $\mathbf{PA}+\{\operatorname{Con}_n(\mathbf{PA})\,|\,n\in\mathbb N\}$, because it proves each statement~$\operatorname{Con}_n(\mathbf{PA})$. For $n=0$ this is trivial. To conclude by meta induction on~$n$, it suffices to observe that
the formula $\operatorname{Con}_n(\mathbf{PA})\rightarrow\operatorname{Con}_{n+1}(\mathbf{PA})$ is the contrapositive of
\begin{equation*}
\operatorname{Pr}_{\mathbf{PA}}(\neg\operatorname{Con}_n(\mathbf{PA}))\rightarrow\neg\operatorname{Con}_n(\mathbf{PA}),
\end{equation*}
which is an instance of~$\operatorname{Rfn}_{\mathbf{PA}}(\Sigma^0_2)$.
\end{proof}

Note that the corollary does not extend to arbitrary formula complexity: In $\mathbf{PA}+\{\operatorname{Con}_n(\mathbf{PA})\,|\,n\in\mathbb N\}$ one cannot prove all instances of~$\mathcal K\Sigma^-_1$, by Corollary~\ref{cor:Sigma2-essential}.

\nocite{ewald-sieg-hilbert}
\bibliographystyle{amsplain}
\bibliography{Independence_non-computational}

\end{document}